\theoremstyle{plain}
\newtheorem{thrm}{Theorem}[section]
\newtheorem{lemma}[thrm]{Lemma}
\newtheorem{prop}[thrm]{Proposition}
\newtheorem{cor}[thrm]{Corollary}
\newtheorem{rmrk}[thrm]{Remark}
\newtheorem{dfn}[thrm]{Definition}
\begin{document}

\newcommand{\SL}{\mathcal L^{1,p}( D)}
\newcommand{\CO}{C^\infty_0( \Omega)}
\newcommand{\Rn}{\mathbb R^n}
\newcommand{\Rm}{\mathbb R^m}
\newcommand{\R}{\mathbb R}
\newcommand{\Om}{\Omega}
\newcommand{\Hn}{\mathbb H^n}
\newcommand{\aB}{\alpha B}
\newcommand{\eps}{\ve}
\newcommand{\BVX}{BV_X(\Omega)}
\newcommand{\p}{\partial}
\newcommand{\IO}{\int_\Omega}
\newcommand{\bG}{\boldsymbol{G}}
\newcommand{\bg}{\mathfrak g}
\newcommand{\bz}{\mathfrak z}
\newcommand{\bv}{\mathfrak v}
\newcommand{\Bux}{\mbox{Box}}
\newcommand{\e}{\ve}
\newcommand{\X}{\mathcal X}
\newcommand{\Y}{\mathcal Y}
\newcommand{\W}{\mathcal W}
\newcommand{\vt}{\vartheta}
\newcommand{\vf}{\varphi}

\numberwithin{equation}{section}

\newcommand{\RN} {\mathbb{R}^N}
\newcommand{\Sob}{S^{1,p}(\Omega)}
\newcommand{\Dxk}{\frac{\partial}{\partial x_k}}
\newcommand{\Co}{C^\infty_0(\Omega)}
\newcommand{\Je}{J_\ve}
\newcommand{\eh}{\ve h}
\newcommand{\Dxi}{\frac{\partial}{\partial x_{i}}}
\newcommand{\Dyi}{\frac{\partial}{\partial y_{i}}}
\newcommand{\Dt}{\frac{\partial}{\partial t}}
\newcommand{\aBa}{(\alpha+1)B}
\newcommand{\GF}{\psi^{1+\frac{1}{2\alpha}}}
\newcommand{\GS}{\psi^{\frac12}}
\newcommand{\HFF}{\frac{\psi}{\rho}}
\newcommand{\HSS}{\frac{\psi}{\rho}}
\newcommand{\HFS}{\rho\psi^{\frac12-\frac{1}{2\alpha}}}
\newcommand{\HSF}{\frac{\psi^{\frac32+\frac{1}{2\alpha}}}{\rho}}
\newcommand{\AF}{\rho}
\newcommand{\AR}{\rho{\psi}^{\frac{1}{2}+\frac{1}{2\alpha}}}
\newcommand{\PF}{\alpha\frac{\psi}{|x|}}
\newcommand{\PS}{\alpha\frac{\psi}{\rho}}
\newcommand{\ds}{\displaystyle}
\newcommand{\Zt}{{\mathcal Z}^{t}}
\newcommand{\XPSI}{2\alpha\psi \begin{pmatrix} \frac{x}{|x|^2}\\ 0 \end{pmatrix} - 2\alpha\frac{{\psi}^2}{\rho^2}\begin{pmatrix} x \\ (\alpha +1)|x|^{-\alpha}y \end{pmatrix}}
\newcommand{\Z}{ \begin{pmatrix} x \\ (\alpha + 1)|x|^{-\alpha}y \end{pmatrix} }
\newcommand{\ZZ}{ \begin{pmatrix} xx^{t} & (\alpha + 1)|x|^{-\alpha}x y^{t}\\
     (\alpha + 1)|x|^{-\alpha}x^{t} y &   (\alpha + 1)^2  |x|^{-2\alpha}yy^{t}\end{pmatrix}}
\newcommand{\norm}[1]{\lVert#1 \rVert}
\newcommand{\ve}{\varepsilon}
\newcommand{\Mp}{\mathcal P^+_{\la,\Lambda}}
\newcommand{\Mm}{\mathcal P^-_{\la,\Lambda}}
\newcommand{\la}{\lambda}
\newcommand{\La}{\Lambda}
\newcommand{\Sn}{\mathcal S_n}
\newcommand{\Lp}{\mathcal L^+}
\newcommand{\Lm}{\mathcal L^-}

\title[Boundary Harnack etc]{Boundary behavior of nonnegative solutions of fully nonlinear parabolic equations}

\author{Agnid Banerjee}
\address{Department of Mathematics\\Purdue University \\
West Lafayette, IN 47907} \email[Agnid Banerjee]{banerja@math.purdue.edu}
\thanks{First author supported in part by the second author's NSF Grant DMS-1001317, and in part by the second author's Purdue Research Foundation Grant \emph{``Gradient bounds, monotonicity of the energy for some nonlinear singular diffusion equations, and
unique continuation''}, 2012}

\author{Nicola Garofalo}
\address{Dipartimento di Ingegneria Civile, Edile e Ambientale (DICEA) \\ Universit\`a di Padova\\ 35131 Padova, ITALY}
\address{Department of Mathematics\\
Purdue University \\
West Lafayette IN 47907-1968}

\email[Nicola
Garofalo]{rembrandt54@gmail.com}

%
%
%
\keywords{}
\subjclass{}

\maketitle

\section{Introduction}\label{S:intro}

The study of the boundary behavior of nonnegative solutions of elliptic or parabolic equations occupies a central position in the theory of second order pde's. One fundamental result states that if $u$ and $v$ are two nonnegative harmonic functions in a Lipschitz domain, and $u$ and $v$ vanish continuously on a portion of the boundary, then they must converge to zero at the same rate. A quantitative, scale-invariant version of this statement is known as \emph{comparison theorem}, and it was first proved by Dahlberg \cite{D} and Wu \cite{Wu}, and subsequently generalized by Jerison and Kenig to non-tangentially accessible (NTA) domains in \cite{JK}. At the same time, in \cite{CFMS} Caffarelli, Fabes, Mortola and Salsa extended this principle to solutions of divergence form elliptic equations in Lipschitz domains, whereas in \cite{B} Bauman proved the comparison principle for solutions of nondivergence form elliptic equations in Lipschitz domains. The proof of the comparison  theorem hinges crucially on the so-called \emph{boundary Harnack principle}, or \emph{Carleson estimate}, which states that for a nonnegative harmonic function which vanishes continuously on a portion of the boundary, then its values in an interior region and up to that boundary portion are controlled by the value of the function itself at a suitably scaled interior point. 

The first version of the boundary Harnack principle for parabolic equations appeared in Kemper's paper
\cite{Ke} on the non-tangential convergence of solutions of the heat equation in Lip$(1,\frac 12)$ domains in $\R^{n+1}$. Such result was subsequently extended by Salsa in \cite{S} to divergence form parabolic equations in Lipschitz cylinders, and by the second named author in \cite{G} to non-divergence form parabolic equations in Lipschitz cylinders. In the same paper a parabolic version of the comparison theorem was first established for $C^2$ cylinders. In \cite{G} it was also proved for the first-time that nonnegative solutions vanishing on the lateral boundary of non-divergence form parabolic equations with time-independent coefficients satisfy a basic elliptic type Harnack inequality, the so-called \emph{backward Harnack inequality}. In \cite{FGS1} the parabolic comparison theorem was obtained for the heat equation in Lip$(1,\frac 12)$ domains in $\R^{n+1}$, whereas in \cite{FGS2} this result, the ensuing doubling property of the caloric measure, and the backward Harnack inequality for equations with time-independent coefficients were established for divergence form parabolic equations in Lipschitz cylinders.  In the paper \cite{FS} Fabes and Safonov first succeeded in proving the backward Harnack inequality for divergence form equations with time-dependent coefficients. Subsequently, this result was generalized to nondivergence form parabolic equations by Fabes, Safonov and Yuan in \cite{FSY}. A unified approach to the backward Harnack inequality and the comparison theorem, which covers at the same time variational and nonvariational parabolic equations, was developed by Safonov and Yuan in the paper \cite{SY}. 

In their recent paper \cite{ASS} Armstrong, Sirakov and Smart have established the comparison theorem for solutions of fully nonlinear elliptic equations, in fact more generally for solutions to differential inequalities for the Pucci extremal operators in $C^{2}$ domains. The authors adapt the proof of Theorem 1.3 in \cite{BCN}, see Proposition 2.1 in \cite{ASS}. The comparison theorem has been one of the crucial tools in \cite{ASS} in studying the behavior of  viscosity solutions of fully nonlinear elliptic equations near singular  boundary points. The same result for smooth domains has also been employed in the recent interesting paper \cite{AS} to establish the strong unique continuation property for solutions to fully nonlinear elliptic equations under some structural assumptions. 

These recent results in fully nonlinear elliptic equations, and the desire of investigating their parabolic counterpart following the historical development for linear equations outlined above, have provided a first natural motivation for us in studying the boundary behavior of nonnegative solutions of fully nonlinear parabolic equations.  In this paper we consider fully nonlinear parabolic equations in $\R^{n+1}$ of the type  
\begin{equation}\label{main0}
F(D^2 u, Du, x, t) = u_{t},
\end{equation}
where $F$ satisfies the structural conditions in \eqref{up} below, along with $F(0,0,x,t) = 0$. Equations such as \eqref{main0} are the fully nonlinear counterpart of the non-divergence form linear equation
\begin{equation}\label{mainlin}
\sum_{i,j=1}^n a_{ij}(x,t)D_{ij} u  + \sum_{i=1}^n b_i u_i = u_{t}.
\end{equation}
Viscosity solutions to \eqref{main0} are a subclass of the (viscosity) solutions to the differential inequalities in \eqref{maine} below. Such inequalities  involve the parabolic extremal Pucci operators and the class of their viscosity solutions will be denoted by the notation $\mathcal S(\la,\Lambda,a)$,  see Definition \ref{D:pucci} and more in general Section \ref{S:prelim} below for a detailed discussion of this aspect. 

The primary purpose of the present paper is studying the boundary behavior of nonnegative functions in the class $\mathcal S(\la,\Lambda,a)$. Since, as we have just observed, viscosity solutions to \eqref{main0} are in this class, it follows that our results provide corresponding statements for nonnegative viscosity solutions to \eqref{main0} as a  special case. In this respect, we would like to emphasize that the results in the present paper encompass a much larger class of equations, than just fully nonlinear uniformly parabolic equations such as \eqref{main0}.  For instance, in Section \ref{S:app}  we show an application of our results to the following nonlinear singular  diffusion equation
\begin{equation}\label{e:p}
|Du|^{2-p} \text{div}(|Du|^{p-2}Du)=u_t,\ \ \ \ \ \ \ \ 1<p<\infty,
\end{equation}
which has recently been studied in \cite{MPR}, \cite{Do} and \cite{BG}. This is an evolution associated with the $p$-Laplacian that generalizes the motion by mean curvature, which corresponds to the case $p=1$. Viscosity solutions of the equation \eqref{e:p} satisfy (in the viscosity sense) the double differential inequality \eqref{maine} below, and therefore belong to the class $\mathcal S(\la,\Lambda,a)$, with 
\[
\la = \min\{1,p-1\},\ \ \Lambda = \max\{1,p-1\},\ \ \ a = 0.
\]
However, the equation \eqref{e:p} does not lend itself to be cast in the form \eqref{main0} above. 
Since the results in this paper pertain functions in the general class $\mathcal S(\la,\Lambda,a)$, it follows that they encompass the degenerate evolution \eqref{e:p} as well. 

Apart from their relevance in the historical development of the subject, and their applications to degenerate diffusion equations such as \eqref{e:p}, we believe that our results constitute a first step in: 
\begin{itemize}
\item[(i)] studying the parabolic counterpart of the results obtained in \cite{ASS}; 
\item[(ii)] establishing space-like unique continuation for solutions to fully nonlinear parabolic  equations. We mention in this connection the recent paper \cite{Ba} in which the first named author has obtained a partial generalization to parabolic fully nonlinear equations of the unique continuation result in \cite{AS};
\item[(iii)] providing some of the basic tools necessary to the study of free boundary problems for fully nonlinear parabolic equations.
\end{itemize} 

This paper is organized as follows. In Section \ref{S:prelim} we introduce the relevant notions and gather some known results for fully nonlinear parabolic equations. In Section \ref{S:ce},  by employing the barrier in \cite{M} and  using ideas from \cite{G}, we prove the Carleson estimate, or  boundary Harnack principle, for nonnegative functions in the class $\mathcal S(\la,\Lambda,a)$ in Lipschitz cylinders, see Theorem \ref{Carleson} below. As an application, we establish an elliptic type Harnack inequality for nonnegative solutions vanishing on the lateral boundary, see Theorem \ref{boundary max} below. In Section \ref{S:C11}  we establish our main result: the local comparison theorem for nonnegative functions in the class $\mathcal S(\la,\Lambda,a)$ in $C^{1,1}$ cylinders, see Theorem \ref{Boundary comparison theorem} below. We show that the barriers in \cite{G} can be adapted to functions in the class $\mathcal S(\la,\Lambda,a)$. In this context, we would like to mention that although the equation \eqref{main0} above is not invariant under translations in space and time, however the class $\mathcal S(\la,\Lambda,a)$  does possess such invariance. It is this simple yet remarkable fact that allows us to establish the backward Harnack inequality for nonnegative functions in $\mathcal S(\la,\Lambda,a)$ vanishing on the lateral boundary, see Theorem \ref{backward harnack} below. In closing, we mention that extending the comparison Theorem \ref{Boundary comparison theorem} to cylinders with a Lipschitz, or even non-tangentially accessible base, remains a challenging open problem. To the best of our knowledge, the comparison theorem is not known even in the elliptic case. Such results would have important applications to free boundary problems for fully nonlinear equations.  We intend to come back to these questions in a future study. 

\vskip 0.2in

\noindent \textbf{Acknowledgment:} We would like to thank Adrzej \'Swiech for bringing reference \cite{CKS} to our attention and for a helpful overview of the parabolic viscosity theory.


\section{Preliminary results}\label{S:prelim}

We consider fully nonlinear parabolic equations in $\R^{n+1}$ of the type  
\begin{equation}\label{main}
F(D^2 u, Du, x, t) = u_{t},
\end{equation}
with an $F$ which satisfies $F(0,0,x,t) = 0$. Hereafter, the notation $\Sn$ will indicate the space of (real) $n\times n$ symmetric matrices $M$. Given a bounded open set $\Om\subset \Rn$, and $T>0$, by saying that \eqref{main} is uniformly parabolic in the cylinder $C_T = \Om\times (0,T)$ we mean that  there exist $0<\lambda\le \Lambda$, and $a \ge 0$, such that for every $N\in \Sn$, $N\ge 0$, every $p, q\in \Rn$, and $(x,t)\in C_T$, one has  
\begin{equation}\label{up}
n\lambda ||N|| - a |p - q|\leq F(M+N,p,x, t) - F(M,q, x, t) \leq  \Lambda ||N||  + a |p -q|,
\end{equation}
where we have let $||M|| = \underset{|x|=1}{\sup} |Mx|$. 

Whenever convenient, we will use the notation $z = (x,t), z_0 = (x_0,t_0), \zeta = (\xi,\tau)$ to indicate points in $\R^{n+1}$. We now introduce the relevant notion of subsolutions and supersolutions, see p. 34 in \cite{W} and p. 2004 in \cite{CKS}. Let $\Om$ be an arbitrary open set in $\R^{n+1}$. By a parabolic  neighborhood of a point $z_{0}= (x_0, t_0)$, we  mean the interesection of an Euclidean neighborhood $U$ of $z_{0}$ with $\Rn \times ( -\infty, t_{0}]$. In the following definition, a local extremum is to be understood with respect to parabolic neighborhoods.

\begin{dfn}\label{D:vss}
Let $D\subset \R^{n+1}$ be a bounded open set. A function $u\in C(D)$ is said to be  a \emph{viscosity subsolution} to \eqref{main} in $D$ if, for given  $\vf \in C^{2} (D)$, at a local maximum $z_0$ of $u - \vf$, we have 
\begin{equation*}\label{vsubs}
\vf_t(z_0)  \leq F(D^2 \vf, D\vf,u,z_0).
\end{equation*}
A function $u\in C(D)$ is said to be  a \emph{viscosity supersolution} to \eqref{main} if, for given  $\vf \in C^{2}(D)$, at a local minimum  $z_0$ of $u - \vf$, we have 
\begin{equation}
\vf_t(z_0)  \geq F(D^2\vf, D\vf,u,z_0)
\end{equation}
If $u$ is both a viscosity subsolution and supersolution, then we say that it is a viscosity solution to \eqref{main} in $D$.
\end{dfn}

Given $0<\la\le \Lambda$, we will denote by 
\[
[[\la,\Lambda]] = \{A\in \mathcal S_n\mid \la \mathbb I_n \le A \le \Lambda \mathbb I_n\}.
\] 
We let $\Mp$ and $\Mm$ denote the maximal and minimal \emph{Pucci extremal operators} corresponding to $\lambda,  \Lambda$, i.e., for every $M\in \mathcal S_n$ we have
\[
\Mp(M) = \Mp(M,\lambda,\Lambda) = \La \sum_{e_i>0} e_i + \la \sum_{e_i<0} e_i,
\]
\[
\Mm(M) = \Mm(M,\lambda,\Lambda) = \la \sum_{e_i>0} e_i + \La \sum_{e_i<0} e_i,
\]
where $e_i = e_i(M)$, $i=1,...,n$,  indicate the eigenvalues of $M$. As is well known, $\Mp$ and $\Mm$ are uniformly elliptic fully nonlinear operators, and moreover
\begin{equation}\label{pucci}
\Mp(M) = \underset{A\in [[\la,\Lambda]]}{\sup}\ \left[\text{trace}(AM)\right],\ \ \ \ \Mm(M) = \underset{A\in [[\la,\Lambda]]}{\inf}\ \left[\text{trace}(AM)\right].
\end{equation}

What is important for us, see Proposition 3.10 in \cite{W}, is that a viscosity solution $u$ to \eqref{main} satisfies the following differential inequalities in the viscosity sense
\begin{equation}\label{maine}
\Mp(D^2 u) + a|Du|  - u_t\geq 0 \geq \Mm(D^2 u) -  a|Du|  -  u_t,
\end{equation}
where $a\ge 0$ is the number appearing in \eqref{up}.
Henceforth, we write
\begin{equation}\label{lpm}
\Lp u =  \Mp(D^2 u) + a|Du|,\ \ \ \Lm u  =  \Mp(D^2 u) - a|Du|.
\end{equation}

\begin{dfn}\label{D:pucci}
Given a bounded open set $D\subset \R^{n+1}$, the symbol $\mathcal S(\la,\Lambda,a)$ will indicate the class of functions $u\in C(D)$ which are viscosity solutions of \eqref{maine} in the sense that $u$ is at the same time a \emph{viscosity subsolution} of the fully nonlinear equation 
\begin{equation}\label{Lp}
\Lp u - u_t = 0,
\end{equation}
and a \emph{viscosity supersolution} of
\begin{equation}\label{Lm}
\Lm u - u_t = 0.
\end{equation}
\end{dfn}

\begin{rmrk}\label{R:pucci}
We note that, unlike that of solutions to \eqref{main} above, the class $\mathcal S(\la,\Lambda,a)$ in $\R^{n+1}$  is invariant under orthogonal transformations in the space variable and also, importantly, it is invariant under the time translations $t\to u(\cdot,t+h)$, $h\in \R$ fixed. This simple, yet remarkable, fact will play an important role in Section \ref{S:C11} below.
\end{rmrk}

For further details on the viscosity theory for functions in the class $\mathcal S(\la,\Lambda,a)$ we refer the reader to section 3 in \cite{W} or section 1 in \cite{CKS}.
We have observed above that if $u$ is a viscosity solution of \eqref{main}, then $u\in \mathcal S(\la,\Lambda,a)$ (the opposite implication is not true, see the discussion in the introduction, and Section \ref{SS:p} below). Since all the results in the present paper pertain to functions $u\in \mathcal S(\la,\Lambda,a)$, we infer that our results encompass viscosity solutions of the equation \eqref{main} as well.
 
For equations of the form \eqref{main}, we have the following comparison principle (see for instance the paper \cite{GGIS}, which covers  more general degenerate equations and even unbounded domains). 

\begin{rmrk}\label{R:comp}
We note that, since the fully nonlinear equations \eqref{Lp} and \eqref{Lm} are special instances of \eqref{main}, Theorem \ref{comparison} and Corollary \ref{comp} below are also valid for the operators \eqref{Lp}, \eqref{Lm}. 
\end{rmrk}

\begin{thrm}\label{comparison}
Let $u$ be a  viscosity subsolution and $v$ be a viscosity supersolution in $C_T$ of the fully nonlinear equation \eqref{main}. If $ u \leq v$ on $\p_p C_T$, then $u\leq v$ in $C_T$.
\end{thrm}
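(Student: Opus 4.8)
The statement to prove is Theorem \ref{comparison}, the comparison principle for viscosity sub/supersolutions of the fully nonlinear parabolic equation \eqref{main}. The plan is to follow the standard Jensen--Ishii / doubling-of-variables approach adapted to the parabolic setting, which is exactly the route taken in the references cited just before the statement (e.g. \cite{GGIS}, \cite{CKS}, \cite{W}).

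\medskip

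\noindent\textbf{Proof plan.} First I would reduce to a strict-subsolution situation: for $\eta>0$ replace $u$ by $u_\eta(x,t) = u(x,t) - \frac{\eta}{T-t}$, which is a strict subsolution in the sense that $(u_\eta)_t - F(D^2 u_\eta, Du_\eta, x,t) \le -\frac{\eta}{(T-t)^2} < 0$, and $u_\eta \to -\infty$ as $t\to T^-$; it suffices to show $u_\eta \le v$ in $C_T$ for every $\eta>0$ and then let $\eta\to 0$. Suppose for contradiction that $M := \sup_{\overline{C_T}}(u_\eta - v) > 0$; by the behavior near $t=T$ and the hypothesis $u\le v$ on $\partial_p C_T$, this positive maximum is attained at an interior parabolic point. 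The second step is the doubling of variables: for $\alpha>0$ (and an auxiliary small parameter if needed to localize), consider
\[
\Phi_\alpha(x,t,y,s) = u_\eta(x,t) - v(y,s) - \frac{\alpha}{2}|x-y|^2 - \frac{\alpha}{2}(t-s)^2,
\]
let $(x_\alpha,t_\alpha,y_\alpha,s_\alpha)$ be a maximum point over $\overline{C_T}\times\overline{C_T}$, and use the standard lemma that, along a subsequence, $\alpha|x_\alpha-y_\alpha|^2 \to 0$, $\alpha|t_\alpha-s_\alpha|^2\to 0$, and $\Phi_\alpha \to M$, while $(x_\alpha,t_\alpha),(y_\alpha,s_\alpha)$ stay in a compact subset of the interior for $\alpha$ large.

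\medskip

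\noindent\textbf{Key step.} Next I would invoke the parabolic theorem on sums (Crandall--Ishii; see the parabolic version in \cite{CKS} or \cite{W}): there exist numbers $b_1 - b_2 = \alpha(t_\alpha - s_\alpha)$ and matrices $X, Y\in \Sn$ with
\[
\begin{pmatrix} X & 0 \\ 0 & -Y \end{pmatrix} \le 3\alpha \begin{pmatrix} \mathbb I_n & -\mathbb I_n \\ -\mathbb I_n & \mathbb I_n \end{pmatrix},
\]
such that $(b_1, \alpha(x_\alpha - y_\alpha), X)$ lies in the parabolic closure of the second-order superjet of $u_\eta$ at $(x_\alpha,t_\alpha)$ and $(b_2, \alpha(x_\alpha-y_\alpha), Y)$ lies in the parabolic closure of the second-order subjet of $v$ at $(y_\alpha, s_\alpha)$. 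The matrix inequality gives in particular $X \le Y$. Using the definitions of viscosity sub/supersolution (phrased via jets) together with the strict inequality for $u_\eta$,
\[
b_1 - F(X, \alpha(x_\alpha-y_\alpha), x_\alpha, t_\alpha) \le -\frac{\eta}{(T-t_\alpha)^2}, \qquad b_2 - F(Y, \alpha(x_\alpha-y_\alpha), y_\alpha, s_\alpha) \ge 0.
\]
Subtracting and using $b_1 - b_2 = \alpha(t_\alpha - s_\alpha)$ yields
\[
\frac{\eta}{(T-t_\alpha)^2} \le F(Y, \alpha q_\alpha, y_\alpha, s_\alpha) - F(X, \alpha q_\alpha, x_\alpha, t_\alpha) + \alpha(t_\alpha - s_\alpha),
\]
with $q_\alpha = x_\alpha - y_\alpha$. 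Finally I would estimate the right-hand side: from $X\le Y$ and the ellipticity/structure condition \eqref{up} (with $p=q=\alpha q_\alpha$, using $N = Y - X \ge 0$), $F(Y,\alpha q_\alpha,\cdot) - F(X,\alpha q_\alpha,\cdot) \le \Lambda\|Y-X\| + (\text{modulus of continuity of }F\text{ in }(x,t))$; the matrix estimate controls $\|Y - X\|$ in a way that, combined with $\alpha|q_\alpha|^2 \to 0$ and the assumed uniform continuity of $F$ in the spatial/time variables, forces the right-hand side to $0$ along the subsequence, contradicting the fixed positive lower bound $\eta/(T-t_\alpha)^2 \ge \eta/T^2 > 0$.

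\medskip

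\noindent\textbf{Main obstacle.} The delicate point is the last estimate — controlling $F(Y,\cdot)-F(X,\cdot)$ as $\alpha\to\infty$. The matrix inequality alone gives $\|X\|,\|Y\|\lesssim \alpha$, which does not vanish; the standard remedy is to note $\mathrm{trace}(X - Y) \le 0$ together with the one-sided bound from the structure condition, or (cleaner here) to exploit that $F$ need only satisfy the one-sided Lipschitz/ellipticity bound \eqref{up}, so that $F(Y,\alpha q_\alpha,\bar x,\bar t) - F(X, \alpha q_\alpha, \bar x, \bar t) \le \Lambda \|(Y-X)^+\|$ with $(Y-X)^+ = 0$ since $Y - X \ge 0$ forces the difference $\le 0$ once we also absorb the $(x,t)$-dependence; handling the spatial dependence of $F$ requires the (tacitly assumed) uniform continuity of $(x,t)\mapsto F(M,p,x,t)$, uniformly on compact sets of $(M,p)$, which is what makes $|x_\alpha - y_\alpha|\to 0$, $|t_\alpha - s_\alpha|\to 0$ useful. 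Since this is the classical argument and \cite{GGIS} covers an even more general setting, I would keep the write-up brief and refer there for the technical details of the parabolic theorem on sums, emphasizing only the reduction and the chain of inequalities above.
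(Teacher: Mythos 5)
The paper does not actually prove Theorem \ref{comparison}; it refers the reader to \cite{GGIS}, so there is no in-text argument to compare against. Your sketch is the standard Jensen--Ishii doubling-of-variables argument that \cite{GGIS} (and also \cite{W}, \cite{CKS}) employ, so in spirit it matches the outsourced proof. Two points deserve correction rather than a passing mention. First, the structure condition \eqref{up} fixes $(x,t)$ on both sides and therefore controls only the dependence of $F$ on the matrix and gradient arguments; it says nothing about how $F$ varies in $(x,t)$. Your final step thus genuinely needs an extra hypothesis of the form $|F(M,p,x,t)-F(M,p,y,s)| \le \omega\bigl(|x-y|(1+\|M\|)+|t-s|\bigr)$ (or a weaker version adapted to the doubling matrices), exactly as in \cite{GGIS}. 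You call this ``tacitly assumed,'' which is fair, but it should be stated: without it the comparison principle can fail even for linear equations with merely measurable coefficients, so it is not a harmless regularity caveat. Second, the sentence ``$(Y-X)^+ = 0$ since $Y-X\ge0$'' is backwards: $Y-X\ge 0$ gives $(Y-X)^+ = Y-X$, not $0$. The correct observation is that the lower inequality in \eqref{up} applied with $M=X$, $N=Y-X\ge 0$ and $p=q$ yields $F(X,p,x,t)-F(Y,p,x,t)\le -n\lambda\|Y-X\|\le 0$; this, not a bound on $\|Y-X\|$, is why the matrix term is harmless even though $\|Y-X\|$ may be of order $\alpha$.
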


The above comparison theorem has the following corollary, which will be subsequently used in this paper.
\begin{cor}\label{comp}
Let $D\subset \R^{n+1}$ be a bounded open set, let $u$ be a viscosity subsolution and $v$ be a viscosity supersolution in $D$ of \eqref{main}. If $u \leq v $ on $\partial D$, then $ u \leq v $ in $\overline{D}$.
\end{cor}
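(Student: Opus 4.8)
The plan is to argue by contradiction, reducing the statement to the cylindrical comparison principle of Theorem~\ref{comparison}. Since $\overline D$ is compact and $u-v$ is continuous on $\overline D$ (we use here that $u$ and $v$ extend continuously to $\partial D$, which is implicit in the hypothesis), the number $M:=\max_{\overline D}(u-v)$ is attained. Assume, towards a contradiction, that $M>0$. Because $u\le v$ on $\partial D$, the contact set $\Gamma:=\{z\in\overline D:(u-v)(z)=M\}$ is a nonempty compact subset of the \emph{open} set $D$. I would then choose among the points of $\Gamma$ one, $z_0=(x_0,t_0)$, with smallest time-coordinate $t_0$ (possible by compactness of $\Gamma$ and continuity of $z\mapsto t$). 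The point of this choice is the one-sided information it yields: $(u-v)(x,t)<M$ for every $(x,t)\in\overline D$ with $t<t_0$.

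The heart of the argument is to carve out, around $z_0$, a sub-cylinder $Q=\Omega'\times(t_0-h,t_0)$ with $\overline Q\subset D$ whose parabolic boundary avoids $\Gamma$ entirely. Set $D_{t_0}:=\{x\in\R^n:(x,t_0)\in D\}$, a nonempty open set containing $x_0$, and $\Gamma_{t_0}:=\{x:(x,t_0)\in\Gamma\}$, a compact subset of $D_{t_0}$ containing $x_0$. I would take $\Omega'$ to be a bounded open neighborhood of $\Gamma_{t_0}$ with $\overline{\Omega'}\subset D_{t_0}$ (such a neighborhood exists because $\Gamma_{t_0}$ is compact and $D_{t_0}$ open), and then $h>0$ small enough that $\overline{\Omega'}\times[t_0-h,t_0]\subset D$ (possible since $\overline{\Omega'}\times\{t_0\}$ is a compact subset of the open set $D$). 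On the resulting cylinder $Q$ (a cylinder in the sense of Theorem~\ref{comparison}, up to a translation in time) $u$ is a viscosity subsolution and $v$ a viscosity supersolution of \eqref{main}. Since $\Gamma_{t_0}\subset\Omega'$, an open set, we have $\partial\Omega'\cap\Gamma_{t_0}=\emptyset$; together with the minimality of $t_0$ this forces $u-v<M$ at every point of $\partial_p Q=(\overline{\Omega'}\times\{t_0-h\})\cup(\partial\Omega'\times[t_0-h,t_0])$: on the bottom face because $t_0-h<t_0$, on the portion of the lateral face with $t<t_0$ again because $t<t_0$, and on $\partial\Omega'\times\{t_0\}$ because those points are not in $\Gamma$. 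Compactness of $\partial_p Q$ then produces $\delta>0$ with $u\le v+(M-\delta)$ on $\partial_p Q$.

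Finally I would apply Theorem~\ref{comparison} on the cylinder $Q$ to the subsolution $u$ and the supersolution $v+(M-\delta)$; note that adding a constant to $v$ preserves the supersolution property, since $F$ in \eqref{main} does not depend on $u$ itself, i.e. $F(D^2(v+c),D(v+c),x,t)=F(D^2v,Dv,x,t)\le v_t=(v+c)_t$. The theorem yields $u\le v+(M-\delta)$ in $Q$, hence on $\overline Q$ by continuity. But $z_0\in\overline Q$ — indeed $x_0\in\Gamma_{t_0}\subset\Omega'\subset\overline{\Omega'}$ and $t_0\in[t_0-h,t_0]$, so $z_0$ is the ``top centre'' of $\overline Q$, and in particular $z_0\notin\partial_p Q$ — while $(u-v)(z_0)=M>M-\delta$. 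This contradiction shows $M\le 0$, i.e. $u\le v$ in $\overline D$.

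I expect the only genuine difficulty to lie in the reduction from the cylindrical Theorem~\ref{comparison} to an arbitrary bounded open set $D$: one must locate a maximum of $u-v$ at the \emph{least} time at which the maximum is attained, and then surround the whole contact slice $\Gamma_{t_0}$ by a spatial domain $\Omega'$, so that the lateral parabolic boundary of the sub-cylinder $Q$ stays strictly below the level $M$. The remaining ingredients — existence of a least-time maximum, the compactness arguments producing $\delta$, and the invariance of the sub/supersolution property under additive constants — are routine.
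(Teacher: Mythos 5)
Your proof is correct, but it takes a genuinely different route from the paper's. The paper reduces to the cylindrical case in two stages: it first extends Theorem~\ref{comparison} to finite unions of cylinders via a time-slicing and spatial decomposition argument, and then handles a general bounded open $D$ by an $\ve$-perturbation: it considers $K_\ve=\{u\ge v+\ve\}$, notes $K_\ve\Subset D$, covers it by a finite union of cylinders $\tilde D\subset D$, applies the union-of-cylinders case to $u$ and $v+\ve$ on $\tilde D$, and lets $\ve\to0$. Your proof instead goes directly to the contradiction: you take $M=\max_{\overline D}(u-v)$, pick the \emph{earliest} time $t_0$ at which $M$ is attained, enclose the spatial slice $\Gamma_{t_0}$ of the contact set in an open set $\Omega'$ with $\overline{\Omega'}\subset D_{t_0}$, and apply Theorem~\ref{comparison} on the single thin sub-cylinder $\Omega'\times(t_0-h,t_0)$. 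The minimality of $t_0$ and the choice of $\Omega'$ ensure $u-v<M$ on the whole parabolic boundary of this sub-cylinder, and compactness buys a gap $\delta>0$, so the cylindrical comparison forces $u-v\le M-\delta$ at $(x_0,t_0)$ — contradiction. Both arguments rest on Theorem~\ref{comparison} together with the observation that constants can be added to a supersolution because $F$ in \eqref{main} does not depend on $u$ itself (and on $u,v$ extending continuously to $\partial D$, implicit in both). What your approach buys is economy: it avoids both the finite-union extension (with its somewhat delicate decomposition into space-time boxes) and the $\ve$-approximation/covering step, replacing them with a single well-chosen sub-cylinder around the first-time maximum. Your formulation also makes the time translation from $(t_0-h,t_0)$ to $(0,h)$ explicit, which the paper leaves tacit.
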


\begin{proof}
When $D$ is a parabolic cylinder $C_T$, then the conclusion follows from Theorem \ref{comparison} above. It easily follows that Corollary \ref{comp} continues to hold for a finite union of such cylinders. In order to see this,  consider
$K= \bigcup_{i=1}^M Q_i \times (t_{1i}, t_{2i})$ and assume that $ u \leq v $ on $\partial K$. Take points   $ s_1 < s_2 < s_3 <...<s_N$ in the increasing order such that $s_{k} =t_{ij}$ for some  $i = 1, 2,  j = 1, .., M$.  Consider the set $K \cap [ s_1 < t < s_2]$ which is  a disjoint  union of sets of the form $\Om_k \times (s_1, s_2)$ where each $\Om_k $ is a   connected domain (union of subcollection of cubes $Q_i$'s). Moreover, $u \leq v $ on $\partial_{p}(\Om_k  \times (s_1, s_2))$ for each $k$. By Theorem \eqref{comparison}, we thus have $u \leq v $ in $\overline{\Om_{k} \times (s_1,s_2)}$. We conclude that  $ u \leq v $ in  $K \cap [t \leq s_2]$. Now, we proceed inductively to obtain the same conclusion in $K \cap (s_i \leq t \leq s_{i+1}]$, and the claim follows.
For a general domain $D$, given $ \ve > 0$, consider the set
\[
K = \{(x,t) \in \Omega \mid u(x,t) \geq v(x,t) + \ve\}.
\]
Suppose $K\not= \varnothing$. In view of the assumption $u\le v$ on $\p D$, the set $K$ is compactly contained in $D$. Therefore, there is an open set $\tilde D \subset D$ such that $K \subset \tilde D$ and $\tilde D$ is a union of finitely many cylinders. On $\partial \tilde D$, we have 
$u(x,t) \leq v(x,t) + \ve$  (note that $ v + \ve $ is also a supersolution of \eqref{Lp}, or \eqref{Lm}).
From what was said above we conclude that $u(x,t) \leq v(x,t) + \ve$ in $\tilde D$, a contradiction to the fact that $K \subset \tilde D$. Therefore, $K = \varnothing$, and we thus have  $u < v + \ve$ for all $\ve > 0$. The desired conclusion follows by letting $\ve \to 0$.

\end{proof}

\begin{rmrk}\label{R:universal}
Hereafter in this paper when we say that a constant is \emph{universal} we mean that it depends only on the dimension $n$, and the parameters $\lambda, \Lambda, a$ in \eqref{maine}.
\end{rmrk}

Now fix $\vt\in (0,\pi)$, and $r>0$. We will denote by $\Gamma_{\vt,r}$ the \emph{truncated cone} of radius $r$, vertex at $0$, aperture $\vt$ and axis along the $x_n$ coordinate axis, defined by 
\[
\Gamma_{\vt,r} = B_r(0) \cap \left(\left\{x\in \Rn\mid \arccos\left(\frac{x_n}{|x|}\right) \le \vt\right\} \cup \{0\}\right).
\]
We denote by $\Gamma_{\vt,\infty} = \left\{x\in \Rn\mid \arccos\left(\frac{x_n}{|x|}\right) \le \vt\right\} \cup \{0\}$ the infinite cone corresponding to the case $r = \infty$. By translating and rotating $\Gamma_{\vt,r}$ we obtain a truncated cone $\Gamma_{\vt,r}(x_0)$ with vertex at any other point $x_0\in \Rn$. We need the following barriers on $\Gamma_{\vt,r}$ for the operator $\Lp$. The existence of such barriers follows directly from a result of Miller in \cite{M} for non-divergence form elliptic operators with bounded measurable coefficients.

\begin{prop}\label{P:bd}
Given  $\vt\in (0,\pi)$, there exist a universal $R_0 > 0$, depending also on $\vt$,  and a uniform barrier $\vf$ for $\Gamma_{\vt, R_0}$ and  the operator $\Lp$. This means that $\vf\in C^2$ in the interior of $\Gamma_{\vt, R_0}$ and that we have for some universal $C, \alpha, \mu_1, K>0$, depending also on $\vt$:
\begin{itemize}
\item[1)] $0 \leq \vf(x) \leq C |x|^{\alpha}$;
\item[2)] $\vf(0)= 0$;
\item[3)] $\vf( x) \geq \mu_1$, $|x|= R_0$;
\item[4)] $\Lp \vf(x)\leq -  K|x|^{\alpha - 2}$.
\end{itemize}
\end{prop}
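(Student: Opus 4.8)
The plan is to reduce the parabolic statement to a genuinely elliptic one, since the operator $\Lp u = \Mp(D^2 u) + a|Du|$ does not involve $t$, and then to invoke Miller's barrier construction from \cite{M}. First I would recall that $\Lp$, viewed as acting on functions of the space variable $x$ alone, is a non-divergence form fully nonlinear uniformly elliptic operator with ellipticity constants $\la,\Lambda$ and a first-order term bounded by $a|Du|$; by the representation \eqref{pucci}, any viscosity subsolution of $\Lp\vf \ge c$ is, at points of twice-differentiability, a classical subsolution of $\operatorname{trace}(A(x)D^2\vf) + a|D\vf| \ge c$ for a suitable measurable $A(x)\in[[\la,\Lambda]]$ (depending on $\vf$). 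Conversely, a $C^2$ function $\vf$ satisfying $\operatorname{trace}(A D^2\vf)+a|D\vf|\le -K|x|^{\alpha-2}$ for \emph{every} $A\in[[\la,\Lambda]]$ satisfies $\Lp\vf \le -K|x|^{\alpha-2}$ pointwise, hence in the viscosity sense. So it suffices to produce a single $C^2$ function on the interior of a truncated cone with the four listed properties, where property 4) is read as $\Mp(D^2\vf)+a|D\vf|\le -K|x|^{\alpha-2}$.

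Next I would carry out the explicit construction. After a rotation we may take the cone axis along $x_n$; write $x = \rho\omega$ with $\rho = |x|$, $\omega\in S^{n-1}$, and look for $\vf(x) = \rho^\alpha g(\omega)$ where $g$ is a smooth positive function on the spherical cap $\{\omega : \arccos(\omega_n)\le\vt\}$ that vanishes to first order on the boundary of a slightly larger cap, chosen so that $\vf\in C^2$ up to the lateral boundary of the cone (one takes $g$ comparable to an eigenfunction of a uniformly elliptic operator on the cap, which is exactly Miller's device). A direct computation gives $D^2\vf = \rho^{\alpha-2}\left(\alpha(\alpha-1)g\,\omega\otimes\omega + \text{(tangential terms involving $g$ and its spherical derivatives)}\right)$ and $|D\vf| = O(\rho^{\alpha-1})$, so that $\Mp(D^2\vf) + a|D\vf| = \rho^{\alpha-2}\left(\Mp(\text{fixed symmetric matrix depending on }\omega) + O(\rho)\right)$. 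The key is to choose the exponent $\alpha = \alpha(\vt,n,\la,\Lambda) > 0$ small enough (so that $\alpha(\alpha-1)$ is very negative, contributing a large negative eigenvalue in the radial direction) that the bracketed expression is bounded above by a negative constant $-2K$ uniformly in $\omega$ on the cap; then restricting $\rho$ to $[0,R_0]$ with $R_0$ small absorbs the $O(\rho)$ error, yielding 4). Properties 1)--3) are then immediate from $\vf = \rho^\alpha g$ with $g$ bounded above and bounded below away from the lateral boundary, after normalizing $g$.

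The main obstacle, and the step I would dwell on, is the simultaneous control of the exponent $\alpha$ and the profile $g$: one needs $g$ smooth enough near $\partial\Gamma_{\vt,R_0}$ that $\vf\in C^2$ in the open cone (so a pure first eigenfunction of the spherical Laplacian, which is only Hölder near the cap boundary, will not do directly — Miller's trick is to solve an auxiliary elliptic problem on a larger cap and restrict), while at the same time keeping $\Mp(D^2\vf) + a|D\vf|$ strictly negative \emph{up to} the lateral boundary, where the tangential second derivatives of $\vf$ degenerate. This is precisely the content of Miller's theorem in \cite{M} for non-divergence operators with bounded measurable coefficients, so rather than reconstructing it I would cite \cite{M} for the existence of the pair $(\alpha,g)$ on a cone and then verify that the resulting $\vf = \rho^\alpha g$ satisfies the viscosity inequality $\Lp\vf\le -K|x|^{\alpha-2}$ for our Pucci-plus-gradient operator — the only addition to Miller's setting being the harmless lower-order term $a|Du| = O(\rho^{\alpha-1})$, which is dominated by $|x|^{\alpha-2}$ for $\rho\le R_0$ after shrinking $R_0$. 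A final remark: since all constants $C,\alpha,\mu_1,K$ and the radius $R_0$ depend only on $n,\la,\Lambda,a$ and the aperture $\vt$, they are universal in the sense of Remark \ref{R:universal}, which completes the proof.
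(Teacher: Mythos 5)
Your proposal is correct and follows essentially the same route as the paper: both invoke Miller's barrier $\vf(x) = |x|^\alpha h\bigl(\arccos(x_n/|x|)\bigr)$ from \cite{M}, pass from the uniform bound $a_{ij}D_{ij}\vf \le -A|x|^{\alpha-2}$ over all $[a_{ij}]\in[[\la,\Lambda]]$ to a bound on $\Mp(D^2\vf)$ via the supremum representation \eqref{pucci}, and absorb the gradient term $a|D\vf|=O(|x|^{\alpha-1})$ by shrinking $R_0$. The additional exposition you give on the spherical profile and the choice of exponent is background the paper elides by citing Theorem 2 of \cite{M} directly.
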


\begin{proof}
 The barrier in Theorem 2 in \cite{M} is of the form $\vf(x)= |x|^{\alpha} h\left(\arccos\big(\frac{x_n}{|x|}\big)\right)$. It is proved in \cite{M} that, for all  matrices $[a_{ij}]\in [[\lambda, \Lambda]]$, one has in $\Gamma_{\vt,R_0}\setminus\{0\}$ 
 \[
 a_{ij}D_{ij}\vf(x) \leq - A |x|^{\alpha - 2}, \ \ \ \ \ \ |D\vf(x)| \leq B |x|^{\alpha - 1},
 \]
where $A, B>0$ are universal constants depending also on $\vt$. Since by \eqref{pucci} we have
\[
\Mp (D^2 \vf)(x) = \underset{\lambda \mathbb I_n \leq  [a_{ij}] \leq \Lambda \mathbb I_n }{\sup}\ a_{ij} D_{ij} \vf(x),
\]
we thus obtain from \eqref{lpm} 
\[
\Lp \vf =  \Mp(D^2 \vf) + a|D\vf| \le - A |x|^{\alpha - 2} + a B  |x|^{\alpha - 1}  = - A |x|^{\alpha -2} [1 - A^{-1} a B |x|].
\]
Since $|x|\le R_0$ for $x\in \Gamma_{\vt,R_0}$, it suffices to choose for instance $R_0\le \frac{A}{2aB}$ to obtain the desired conclusion with $K = A/2$.

\end{proof}

Now, if $\Gamma_{\vt,r}$ indicates a truncated cone of radius $r$, aperture $\vt$, and vertex at $0$, we denote by
\[
C_{\vt,r}= \Gamma_{\vt,r} \times (-r^2, r^2)
\]
the \emph{parabolic wedge} with vertex at $(0,0)\in \R^{n+1}$, radius $r$, aperture $\vt$ and height $2r^2$. Using Proposition \ref{P:bd}, and adapting Theorem $1.12$ in \cite{G} for linear parabolic operators in non-divergence form, we can construct a barrier for  $C_{\vt,r}$ and the fully nonlinear parabolic operator $\Lp - \p_t$.  

\begin{thrm}\label{barrier}
There is a universal $R_0 <1$, depending also on $\theta$, such that for $0<r \le R_0$, there exists a uniform barrier for $\Lp - \partial_t $ on $C_{\vt,r}$. This means that we can find a function $\psi \in C^{2}$ in the interior of $C_{\vt,r}$ for which we have for some universal $C, \alpha, \mu>0$, depending also on $\vt$:
\begin{itemize}
\item[1)] $0 \leq  \psi(x,t) \leq C\left(\frac{|x| + |t|^{1/2}}{r}\right)^{\alpha}$;
\item[2)] $\psi (0,0) = 0$;
\item[3)] $\psi(x, t) \geq \mu $ for each $(x, t) \in \partial C_{\vt,r}$ such that either $|x|= r$, or $t= - r^2$;
\item[4)] $\Lp (\psi) - \psi_t < 0$. 
\end{itemize}
\end{thrm}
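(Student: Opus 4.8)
The plan is to build $\psi$ from the elliptic barrier $\vf$ of Proposition \ref{P:bd} by adding a small, carefully tailored time correction, after first reducing to a single fixed scale. Suppose we have produced a barrier $\psi_0$ on the fixed wedge $C_{\vt,R_0}$ enjoying 1)--4). For $0<r\le R_0$ set $\rho=r/R_0\le1$ and $\psi(x,t)=\psi_0(x/\rho,t/\rho^2)$. The parabolic dilation maps $C_{\vt,r}$ onto $C_{\vt,R_0}$, fixes the vertex, and sends $\{|x|=r\}$ to $\{|x|=R_0\}$ and $\{t=-r^2\}$ to $\{t=-R_0^2\}$, so 1)--3) transfer with the same constants; and since
\[
\Lp\psi-\psi_t=\rho^{-2}\big[\Mp(D^2\psi_0)+a\rho\,|D\psi_0|-(\psi_0)_t\big]\le\rho^{-2}\big(\Lp\psi_0-(\psi_0)_t\big)<0,
\]
(the functions on the right evaluated at $(x/\rho,t/\rho^2)$, and using $\rho\le1$ so that $a\rho\le a$), property 4) transfers as well. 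Hence it is enough to construct $\psi_0$ on $C_{\vt,R_0}$.

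For the construction on $C_{\vt,R_0}$, first note that in Proposition \ref{P:bd} one may take the exponent as small as one pleases (Miller's barrier with a smaller exponent is even ``more superharmonic''), so we fix $\alpha\in(0,1)$ once and for all. We then seek
\[
\psi_0(x,t)=\vf(x)+\sigma(x,t),\qquad\sigma(x,t)=c\Big[\big(|x|^2+R_0^2+(-t)_+^3\big)^{\alpha/2}-\big(|x|^2+R_0^2\big)^{\alpha/2}\Big],
\]
with a small $c>0$ to be chosen, where $(-t)_+=\max\{-t,0\}$. Here $\vf$ takes care of 2), of the lower bound on the cap $\{|x|=R_0\}$ in 3), and of the spatial estimate $\Lp\vf\le-K|x|^{\alpha-2}$. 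The correction $\sigma$ is $\ge0$ (it vanishes identically for $t\ge0$), it is $C^2$ across $\{t=0\}$ because $(-t)_+^3\in C^2(\R)$ and the base stays $\ge R_0^2>0$, it vanishes at the vertex, and $\sigma(0,-R_0^2)>0$; setting $\mu=\min\{\mu_1,\sigma(0,-R_0^2)\}$ gives 3). Since $\alpha<2$ the map $s\mapsto(s+R_0^2)^{\alpha/2}$ is concave, and a short computation with $|x|\le R_0$, $|t|\le R_0^2$ yields $0\le\sigma(x,t)\le cC'\,(|x|+|t|^{1/2})^\alpha$, which together with $\vf\le C|x|^\alpha$ gives 1).

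The heart of the matter is the differential inequality 4). For $t\ge0$ one simply has $\Lp\psi_0-(\psi_0)_t=\Lp\vf\le-K|x|^{\alpha-2}<0$. For $t<0$, subadditivity of $\Mp$ and the triangle inequality for the gradient term give $\Lp\psi_0-(\psi_0)_t\le\Lp\vf+(\Lp\sigma-\sigma_t)\le-K|x|^{\alpha-2}+(\Lp\sigma-\sigma_t)$, so everything reduces to bounding $\Lp\sigma-\sigma_t$. Differentiating $\sigma$ and using $|x|\le R_0$, $|t|\le R_0^2$ and $\alpha<2$, one checks that $\|D^2\sigma\|$, $|D\sigma|$ and $|\sigma_t|$ are all $\le cC_1R_0^{\alpha+2}$ for a universal $C_1$, hence $\Lp\sigma-\sigma_t\le cC_2R_0^{\alpha+2}$. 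Since $|x|\le R_0$ and $\alpha-2<0$ we have $K|x|^{\alpha-2}\ge KR_0^{\alpha-2}$; choosing $c=K/(2C_2R_0^{4})$ makes $\Lp\sigma-\sigma_t\le\tfrac12K|x|^{\alpha-2}$, so $\Lp\psi_0-(\psi_0)_t\le-\tfrac12K|x|^{\alpha-2}<0$ throughout the interior. This proves 4), and the reduction step above completes the theorem.

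I expect this last step to be the main obstacle, and it is precisely why one begins by lowering $\alpha$: the favourable term $\Lp\vf\le-K|x|^{\alpha-2}$ is uniformly negative on $\{|x|\le R_0\}$ only when $\alpha<2$, whereas for $\alpha\ge2$ it degenerates at the vertex and cannot absorb the inevitably non-degenerate contribution of any correction that is positive on the bottom face $\{t=-R_0^2\}$ (and a correction \emph{must} be positive there, since $\psi(0,-r^2)\ge\mu>0=\psi(0,0)$). In that regime one would instead need a genuinely parabolically $\alpha$-homogeneous barrier, which is considerably more delicate. Once $\alpha<2$ is in force, the remaining work---the derivative bounds on $\sigma$ and the bookkeeping of the universal constants $C,\alpha,\mu,R_0$---is routine, in the spirit of Theorem $1.12$ in \cite{G}.
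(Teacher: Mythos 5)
Your proof is correct and follows the same overall strategy as the paper: reduce to the fixed scale $r=R_0$ by the parabolic dilation $\psi(x,t)=\psi_0(x/\rho,t/\rho^2)$, exploiting $a\rho\le a$ for $\rho\le 1$ exactly as the paper does, and then build the barrier at scale $R_0$ by adding a time correction to Miller's elliptic barrier $\vf$. Where you depart from the paper is in the choice of that correction: the paper simply takes $\psi_0(x,t)=\vf(x)+\frac{K}{2\kappa}|t|^{\kappa}$ with an integer $\kappa>2$, which is constant in $x$ so contributes nothing to $\Lp$, and whose time derivative $\frac{K}{2}|t|^{\kappa-1}\operatorname{sgn}(t)$ is absorbed directly by $\Lp\vf\le -K|x|^{\alpha-2}$ using $|x|\le R_0<1$ and $\alpha<2$. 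Your $\sigma(x,t)=c\big[(|x|^2+R_0^2+(-t)_+^3)^{\alpha/2}-(|x|^2+R_0^2)^{\alpha/2}\big]$ carries genuine spatial dependence, so you must also estimate $\Mp(D^2\sigma)$ and $|D\sigma|$ and absorb those too; this works but is more effort for the same result. You correctly highlight that the absorption hinges on $\alpha<2$, a point the paper uses but leaves implicit, and your remark that a purely additive correction cannot succeed when $\alpha\ge 2$ is a valid caveat. One small slip: $\mu$ should be $\min\{\mu_1,\sigma(R_0,-R_0^2)\}$ rather than $\min\{\mu_1,\sigma(0,-R_0^2)\}$, since on the bottom face $\{t=-R_0^2\}$ your $\sigma$ is a decreasing function of $|x|$ while $\vf$ vanishes on the lateral boundary of the cone, so the uniform lower bound on $\psi_0$ there is controlled by the value of $\sigma$ at $|x|=R_0$, not at the vertex.
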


\begin{proof}
We begin by observing that the theorem holds true when $r= R_0$ where $R_0$ is as in Proposition \eqref{P:bd} and then analyze the general case. Without loss of generality, we can assume $R_0 < 1 $. We  define 
\[
\psi(x,t) = \vf(x) +  \frac{K}{2\kappa} |t|^{\kappa},
\]
where $\kappa\in \mathbb N$ with $\kappa>2$.
From Proposition \ref{P:bd} above, by arguing as in the proof of Theorem 1.12 in \cite{G}, it is easily seen that we can reach the desired conclusion for $r=R_0$.  For the general case, we define $\psi_1(x,t) = \psi (\frac{R_0}{r}x,\frac{R_0^2 }{r^2}t)$ for $(x,t) \in C_{\vt,r}$. Let us notice that $(x,t) \in C_{\vt,r}$ if and only if $(\frac{R_0}{r}x, \frac{R_0^2 }{r^2}t) \in C_{\vt,R_0}$. Then, we have 
\[
\left[\Lp (\psi_1) -(\psi_1)_t\right](x,t) = \frac{R_0^2}{r^2} \left[\Mp(D^2 \psi) + \frac{r}{R_0} a |D\psi|  - \psi_t\right]\big(\frac{R_0}{r}x,\frac{R_0^2 }{r^2}t\big).
\]
Now, when $0<r\le R_0$ we have $\frac{r}{R_0}\leq 1$, and since $(\frac{R_0}{r}x, \frac{R_0^2 }{r^2}t) \in C_{\vt,R_0}$, by the properties 4) of $\psi$ in $C_{\vt,R_0}$ we conclude
\[
\left[\Lp (\psi_1) -(\psi_1)_t\right](x,t) \le \frac{R_0^2}{r^2} \left[\Mp(D^2 \psi) + a |D\psi|  - \psi_t\right]\big(\frac{R_0}{r}x,\frac{R_0^2 }{r^2}t\big) <0.
\]
The properties 1)-3) are also easily verified for the function $\psi_1$ in the wedge $C_{\vt,r}$. Therefore, the conclusion follows.

\end{proof}

We close this section by quoting the important Krylov-Safonov type Harnack inequality for solutions of the differential inequalities \eqref{maine}. For this result we refer the reader to Theorem 4.18 in \cite{W}, when no lower order terms are present, and to Section 4.6 in the same paper for a discussion on how to handle lower order terms. One should also see Lemma 5.2 in \cite{CKS} for the explicit statement. For $\eta >1 $ and $r \in [0, 1]$, define 
\[
Q(\eta, r) = \{(x,t)\in \R^{n+1}\mid \max_{1 \leq i \leq n} |x_{i}| \leq r,\ 0 < t \leq \eta r^2\}.
\]

\begin{thrm}\label{Harnack}
Let $u\ge 0$ be a function in the class $\mathcal S(\la,\Lambda,a)$ in $Q(\eta, r)$ and $0 < \sigma < 1$. Then, there exists a universal constant $C= C(\eta,\sigma)>0$ such that
\begin{equation}
\max_{ |x| \leq \sigma r } u(x, r^2) \leq C \min_{|x| \leq \sigma r} u(x,\eta r^2).
\end{equation}
\end{thrm}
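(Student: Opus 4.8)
\textbf{Proof proposal for Theorem \ref{Harnack}.}

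The plan is to derive this parabolic Harnack inequality for the class $\mathcal S(\la,\Lambda,a)$ from the interior Harnack inequality already available in the literature (Theorem 4.18 in \cite{W} for the case $a=0$, and the discussion in Section 4.6 there for lower order terms; see also Lemma 5.2 in \cite{CKS}), essentially by a standard covering and chaining argument. First I would reduce to a normalized configuration: after the parabolic rescaling $u(x,t)\mapsto u(rx,r^2 t)$, which preserves the class $\mathcal S(\la,\Lambda,a)$ because its defining inequalities \eqref{maine} are invariant under such scalings (the $a|Du|$ term scales favorably, as already exploited in the proof of Theorem \ref{barrier}), it suffices to prove the statement for $r=1$, i.e. to compare $\max_{|x|\le\sigma}u(x,1)$ with $\min_{|x|\le\sigma}u(x,\eta)$ on $Q(\eta,1)$.

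Next I would invoke the basic interior Harnack inequality in the form available for the class $\mathcal S(\la,\Lambda,a)$: there are universal constants $C_0>1$ and a fixed geometric ratio such that if $u\ge0$ lies in the class on a small space-time box $B_\rho(x_0)\times(s-\rho^2,s)$, then $u(x_0,s-\tfrac34\rho^2)\le C_0\, u(x_0',s)$ for $x_0'$ in $B_{\rho/2}(x_0)$, with the later time $s$ and the comparison point suitably placed in the forward time direction (this is exactly the content of the cited results, once one notes that $u$ being simultaneously a subsolution of \eqref{Lp} and supersolution of \eqref{Lm} puts it in the Krylov--Safonov framework). Then the comparison between the slice $\{t=1\}$ and the slice $\{t=\eta\}$ is obtained by chaining finitely many such small-box Harnack steps along a Harnack chain: first connect the point realizing $\max_{|x|\le\sigma}u(\cdot,1)$ to the spatial origin at a slightly later time by a horizontal chain of overlapping balls of radius comparable to $(\eta-1)^{1/2}$ staying inside $Q(\eta,1)$ — the number of balls needed depends only on $n$, $\sigma$, $\eta$ — losing a factor $C_0$ at each step; then advance in time from $t$ slightly above $1$ up to $t=\eta$ in finitely many vertical Harnack steps; finally spread out horizontally again at time $t=\eta$ to dominate $\min_{|x|\le\sigma}u(\cdot,\eta)$. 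Composing all these estimates gives $\max_{|x|\le\sigma}u(x,1)\le C\min_{|x|\le\sigma}u(x,\eta)$ with $C=C_0^{\,N}$ and $N=N(n,\sigma,\eta)$, hence $C=C(n,\la,\Lambda,a,\sigma,\eta)$, which is the asserted universal dependence (plus $\eta,\sigma$).

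The only genuine point requiring care — and the step I expect to be the main obstacle — is the bookkeeping of the time direction in the chaining: parabolic Harnack inequalities only propagate positivity \emph{forward} in time, so every link in the chain must be oriented so that the point where we have information sits at an earlier time than the point we want to bound, and one must check that a chain with the correct orientation actually fits inside $Q(\eta,1)$ while connecting the slice $t=1$ (where the max is taken) to the slice $t=\eta$ (where the min is taken). This is why the hypothesis $\eta>1$ is essential: it leaves room between the two slices to insert the forward-in-time steps, and it is also why one takes $\sigma<1$, to keep the horizontal chains away from the lateral boundary $\max_i|x_i|=1$ of $Q(\eta,1)$. Everything else — the scaling reduction, the finiteness and universal cardinality of the Harnack chain, and the invariance of the class under space translations and rotations noted in Remark \ref{R:pucci} — is routine once this orientation is set up correctly.
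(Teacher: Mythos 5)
The paper does not supply a proof of Theorem \ref{Harnack} at all: in the sentence immediately preceding the statement, the authors say they are ``quoting'' the Krylov--Safonov type Harnack inequality and refer the reader to Theorem~4.18 and Section~4.6 of \cite{W} and to Lemma~5.2 of \cite{CKS}, the latter of which gives essentially this exact statement. So there is no ``paper's own proof'' to compare against --- the result is imported wholesale from the literature.

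Your sketch is a reasonable reconstruction of how one would derive the stated form from more primitive pieces of the Krylov--Safonov theory. The two main ingredients --- a parabolic rescaling to reduce to $r=1$, followed by a forward-in-time Harnack chain of bounded length depending only on $n,\sigma,\eta$ --- are the standard route, and your remark about the time-orientation of the chain (positivity only propagates forward, so $\eta>1$ is what leaves room for the chain, and $\sigma<1$ keeps the chain off the lateral boundary) is exactly the point that needs care. One small clarification on the scaling step is worth recording: if $v(y,s)=u(ry,r^2s)$, then $D^2 v = r^2 D^2u$, $Dv = rDu$, $v_s = r^2 u_t$, so $v$ satisfies the Pucci inequalities with drift constant $ar$ in place of $a$; since $r\le 1$ this gives $ar\le a$, hence $v\in\mathcal S(\la,\Lambda,a)$ on $Q(\eta,1)$. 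The drift term therefore does not obstruct the reduction to $r=1$, and indeed this is the same favorable direction of scaling used in the proof of Theorem~\ref{barrier}. I see no genuine gap in the proposal; it simply does more work than the paper, which chooses to cite rather than re-prove the result.
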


\section{Boundary Harnack Principle for Lipschitz cylinders}\label{S:ce}

 In this section, we work on a cylinder $C_T = \Om \times (0,T)$, where $\Om \subset \Rn$ is a Lipschitz domain. 
We recall that a bounded domain $\Om$ is a Lipschitz domain if for each $Q \in \partial \Om$ there exist a ball $B_{r_0}(Q)$ centered at $Q$ and a coordinate system for $R^{n}$ such that in these coordinates
\[
B_{r_0}(Q) \cap \Om = B_{r_0} \cap \{(x', x_n): x' \in \R^{n-1}, x_n > \phi (x')\},
\]
\[
B_{r_0}(Q) \cap \p \Om = B_{r_0} \cap \{(x', x_n): x' \in \R^{n-1}, x_n = \phi (x')\},
\]
where $\phi:\R^{n-1}\to \R$ is a Lipschitz continuous function with $||D\phi||_{L^\infty(\R^{n-1})} \leq m$.
By compactness we can, and will, assume that the radius $r_0$ and the constant $m$ are independent of $Q\in \p \Om$. Such constants determine what will be called the Lipschitz character of $\Om$. We further indicate with $S_T= \partial \Om \times (0, T)$ the lateral boundary of $C_T$, while $\partial_{p} C_T= S_T \cup (\Om \times \{0\})$.
For $(Q,s)\in S_T$ and sufficiently small $r \leq \frac{r_0}{10}$, we define 
\[
\Psi_r(Q,s)= \{(x,t) \in \overline{C}_T\mid |x-Q| <r, |t-s|<r^2\},
\]
\[
\Delta_r(Q,s)=  \Psi_r(Q,s) \cap \partial_{p}C_T.
\]
If $Q\in \partial \Om$ is given by $(x'_0, \phi(x'_0))$ in the above local coordinates, for $r>0$ small enough we set 
\[
\overline{A}_r(Q,s)= (x'_0, \phi(x'_0)+ r, s+ 2r^2),\ \ \ \ \underline{A}_r(Q,s)= (x'_0, \phi(x'_0)+  r, s- 2r^2).
\]

A Lipschitz domain $\Om$ has a uniform exterior (and interior) cone $\Gamma_{\vt, \infty}(Q)$  at every point $Q\in \p \Om$. The uniform aperture $\vt\in (0,\pi)$ of such cones depends on the Lipschitz constant $m$ of the local defining function $\phi$ of $\p \Om$.

By adapting the proof of Lemma 2.1 in \cite{G}, we next show that a solution to the fully nonlinear equation \eqref{maine} which continuously vanishes in a surface neighborhood  of a point on $S_{T}$ is actually H\"older continuous up to the boundary.

\begin{lemma}\label{Holder}
Let $(Q,s)\in S_T$. There is a universal $R_0>0$, depending  also on the Lipschitz character of $\Om$, such that for $0<r<R_0$, and sufficiently small depending also on $s, T - s, r_0$, and any  $u\ge 0$ in the class $\mathcal S(\la,\Lambda,a)$ in $\Psi_{r}(Q,s)$ and vanishing continuously on $\Delta_{r}(Q,s)$, we have for universal $C, \alpha>0$, depending also on the Lipschitz character of $\Om$, 
\begin{equation}\label{holder}
u(x,t) \leq C \left(\frac{|x-Q| + |t- s |^{1/2}}{r}\right)^{\alpha} M_{r}(u),\ \ \ \ (x,t) \in \Psi_{r}(Q,s),
\end{equation}
where $M_{r}(u)= \underset{\Psi_{r}(Q,s)}{\sup} u$.
\end{lemma}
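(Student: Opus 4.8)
The plan is to combine the interior/boundary barrier from Theorem \ref{barrier} with an iteration argument that propagates decay of the sup of $u$ on a decreasing sequence of parabolic cylinders, exactly as in the proof of Lemma 2.1 in \cite{G}. First I would fix the point $(Q,s)\in S_T$ and use the uniform exterior cone property: there is an aperture $\vt\in(0,\pi)$, depending only on the Lipschitz character of $\Om$, and a cone $\Gamma_{\vt,\infty}(\tilde Q)$ exterior to $\Om$ at each boundary point near $Q$. After a translation in space and time (legitimate since $\mathcal S(\la,\Lambda,a)$ is translation invariant by Remark \ref{R:pucci}) and a harmless dilation, I would position a parabolic wedge $C_{\vt,\rho}$ (complementary to $\Om$, with vertex on $S_T$) so that the barrier $\psi$ of Theorem \ref{barrier} is defined on the part of $\Psi_r(Q,s)$ that sits outside $\Om$ but its ``good'' behavior can be read off on $\Psi_{r}(Q,s)\cap C_T$; more precisely, because $\Om$ contains no points of the exterior wedge, the function $\psi$ restricted to $C_T\cap\Psi_r(Q,s)$ is a supersolution of $\Lp-\p_t$ which is bounded below by a universal $\mu>0$ on the ``far'' part of the boundary $\{|x-Q|=r\}\cup\{t-s=-r^2\}$ and vanishes at $(Q,s)$ with the growth $\psi(x,t)\le C\big((|x-Q|+|t-s|^{1/2})/r\big)^\alpha$.

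Next I would run the comparison argument on the region $\Psi_r(Q,s)\cap C_T$. On its parabolic boundary the function $u$ is either $0$ (on $\Delta_r(Q,s)$, by hypothesis) or at most $M_r(u)$ (on the remaining ``far'' portion, which is contained in $\{|x-Q|=r\}\cup\{t-s=-r^2\}$ up to the part lying in $\{t-s=r^2\}$, which by the parabolic nature of the comparison principle does not enter). Hence $u\le \dfrac{M_r(u)}{\mu}\,\psi$ on $\p_p\big(\Psi_r(Q,s)\cap C_T\big)$. Since $\dfrac{M_r(u)}{\mu}\psi$ is a supersolution of $\Lm-\p_t$ (indeed of $\Lp-\p_t$, and $\Lp\le\Lm$ is the wrong direction — here I use that $\psi$ is a supersolution for the maximal operator $\Lp$, which is exactly what property 4) of Theorem \ref{barrier} gives) and $u$ is a subsolution of $\Lp-\p_t$, Corollary \ref{comp} together with Remark \ref{R:comp} yields
\[
u(x,t)\ \le\ \frac{M_r(u)}{\mu}\,\psi(x,t)\ \le\ \frac{C}{\mu}\left(\frac{|x-Q|+|t-s|^{1/2}}{r}\right)^{\alpha} M_r(u),\qquad (x,t)\in \Psi_r(Q,s)\cap C_T,
\]
which is the assertion \eqref{holder} (the values of $u$ on $\Psi_r(Q,s)\setminus C_T$ being irrelevant, or $u$ being extended by $0$ there).

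The main obstacle, and the point requiring the most care, is the geometric bookkeeping near the boundary: the barrier of Theorem \ref{barrier} lives on a clean parabolic wedge $C_{\vt,\rho}$ with vertex exactly at a boundary point and axis along a fixed direction, whereas $\Om$ is only Lipschitz, so the exterior cone at $Q$ is merely contained in the complement and is not aligned with the wedge of a neighboring boundary point. One must therefore choose $r$ small enough (depending on $r_0$, on $\mathrm{dist}$ from the top and bottom of the cylinder, i.e. on $s$ and $T-s$, and on the Lipschitz constant $m$) so that $\Psi_r(Q,s)$ stays inside the chart $B_{r_0}(Q)$ and so that a single rescaled barrier dominates $u$ on the whole of $\Psi_r(Q,s)\cap C_T$; the uniformity of $\vt$ and $R_0$ in the Lipschitz character is what makes the constants $C,\alpha$ universal. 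A secondary technical point is verifying that the ``far'' part of $\p_p(\Psi_r(Q,s)\cap C_T)$ is genuinely covered by $\{|x-Q|=r\}\cup\{t-s=-r^2\}$, where property 3) of $\psi$ applies, and that the portion of $\p\big(\Psi_r(Q,s)\cap C_T\big)$ lying in $\{t-s=+r^2\}$ plays no role because of the parabolic (backward-in-time) structure of the comparison principle in Corollary \ref{comp}.
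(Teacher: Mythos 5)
Your plan follows the same route as the paper: use the exterior-cone property of the Lipschitz domain to produce a parabolic wedge at $(Q,s)$, take the barrier $\psi$ from Theorem \ref{barrier}, multiply by $M_r(u)/\mu$ so that $w=\tfrac{M_r(u)}{\mu}\psi$ dominates $u$ on the parabolic boundary, compare $u$ (subsolution of $\Lp-\partial_t$) with $w$ (supersolution of $\Lp-\partial_t$), and read off \eqref{holder} from the growth estimate 1) of Theorem \ref{barrier}. That is exactly the paper's argument.

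Two points in your write-up, though, deserve correction. First, your geometric description of where the barrier lives is backwards: you say the wedge is ``complementary to $\Om$'' and that $\psi$ is ``defined on the part of $\Psi_r(Q,s)$ that sits outside $\Om$.'' This cannot be right, because a few lines later you restrict $\psi$ to $C_T\cap\Psi_r(Q,s)\subset\Om\times(0,T)$. What the paper actually does is set $\Gamma_{r/2}(Q)=B_{r/2}(Q)\setminus\Gamma_{\vt,\infty}(Q)$, i.e.\ the ball with the \emph{exterior} cone removed; this is itself a truncated cone of aperture $\pi-\vt$ that \emph{contains} $\Om\cap B_{r/2}(Q)$. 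The barrier is built on the corresponding parabolic wedge $C_{r/2}(Q,s)=\Gamma_{r/2}(Q)\times(s-r^2,s+r^2)$, and the comparison is done on $C_T\cap C_{r/2}(Q,s)$. Second, the paper does not run the comparison on all of $\Psi_r(Q,s)\cap C_T$ as you propose; it uses the wedge of radius $r/2$ only and then handles the remaining region $\Psi_r(Q,s)\setminus C_{r/2}(Q,s)$ trivially, observing that there $|x-Q|\ge r/2$ forces $\left(\tfrac{|x-Q|+|t-s|^{1/2}}{r}\right)^\alpha\ge 2^{-\alpha}$, so $u\le M_r(u)\le 2^\alpha\left(\tfrac{|x-Q|+|t-s|^{1/2}}{r}\right)^\alpha M_r(u)$. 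Your single-region variant can be made to work, but the paper's split is cleaner and avoids worrying about whether the wedge at radius $r$ covers all of $\Om\cap B_r(Q)$. Finally, your parenthetical about $\Lm$ is a distraction: $u$ is a subsolution and $w$ a supersolution of the \emph{same} equation $\Lp-\partial_t=0$, and the comparison principle (Theorem \ref{comparison} via Remark \ref{R:comp}) applies directly; no appeal to $\Lm$ is needed or helpful here.
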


\begin{proof}
as in Proposition \ref{P:bd},

Without loss of generality, we may assume that $M_{r}(u) < \infty$. Let  $\Gamma_{\vt, \infty}(Q)$ be a fixed infinite cone exterior to $\Om$ and having vertex at $Q$. If $\Gamma_{r/2}(Q)= B_{r/2}(Q) \setminus \Gamma_{\vt, \infty}(Q)$, let $C_{r/2}(Q,s)= \Gamma_{r/2}(Q) \times (s-r^2, s+ r^2)$ denote the parabolic wedge at $(Q,s)$,  and let $\psi$ be a uniform barrier on $C_{r/2}(Q, s)$ constructed as in Theorem \ref{barrier} above. In the cylinder $C_{T} \cap C_{r/2}(Q, s)$, we consider  
\[
w(x,t) = \frac{M_{r}(u)}{\mu} \psi(x,t),
\]
where $\mu$ is as in Theorem \ref{barrier}. By 4) in that theorem we have $\Lp(w)  -w_{t} < 0 $ in $ C_{T} \cap C_{r/2}(Q,s)$, whereas 3) implies $w \geq u $ on $ \partial_{p} (C_{T} \cap C_{r/2}(Q,s))$. Since $u$ is a subsolution and $w$ a supersolution of $\Lp - \partial_{t}$, by the comparison principle  Theorem \ref{comparison}, see Remark \ref{R:comp}, we conclude
\begin{equation}
\ u(x,t) \leq  \frac{M_{r}(u)}{\mu} \psi(x,t),\ \ \  \ \ (x,t) \in C_{T} \cap C_{r/2}(Q,s).
\end{equation}
From 1) in Theorem \ref{barrier}, we  obtain the desired bound \eqref{holder} on $u$ in $C_{T} \cap C_{r/2}(Q,s)$. If instead $(x,t) \in  \Psi_{r}(Q,s) \setminus C_{r/2}(Q,s)$, then $|x-Q| \geq r/2$.
Hence,  in that set , we have $\left(\frac{|x-Q| + |t- s |^{1/2}}{r}\right)^{\alpha} \geq 2^{-\alpha}$. Therefore for $(x,t) \in  \Psi_{r}(Q,s) \setminus C_{r/2}(Q,s)$ we trivially have 
\[
u(x,t) \leq M_{r}(u) \leq 2^{\alpha} \left(\frac{|x-Q| + |t- s |^{1/2}}{r}\right)^{\alpha}  M_{r}(u).
\]
This completes the proof.

\end{proof}

The next statement provides a quantitative information on the growth near the boundary of a nonnegative solution. We will use this result to start the worsening process in the proof of Carleson's estimate, Theorem \ref{Carleson} below. Its proof is based on the non-tangentially accessible character of a Lipschitz domain, and on the Harnack inequality in Theorem \ref{Harnack} above. Once these tools are available, one can carry the argument similarly to the proof of Lemma 2.2 in \cite{G}. We thus omit it, and refer the reader to that source for the relevant details.

\begin{lemma}\label{growth}
Let $(Q_{0}, s_{0})  \in S_{T}$, $\alpha \in (0,1)$. There exists a universal constant $C= C(m,\alpha)>0$ such that, if
$u\ge 0$  is a function in the class $\mathcal S(\la,\Lambda,a)$ in $\Psi_{2r}(Q_{0}, s_{0})$, then the following condition holds: if $(y,s), (x,t) \in \Psi_{3r/2}(Q_{0}, s_{0})$ are such that \emph{dist}$(x, \partial D)= \alpha r$, $t- s \geq r^{2}/2 $, \emph{dist}$(y, \partial D) > \frac{r}{2^h}$, where $h\in \mathbb N \cup\{0\})$ is such that $2^{-h} \leq \alpha$, then 
\begin{equation}\label{worsen}
u(y,s) \leq C^{h}u(x,t).
\end{equation}
\end{lemma}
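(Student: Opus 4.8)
The plan is to chain together a controlled number of applications of the interior Harnack inequality (Theorem \ref{Harnack}) along a Harnack chain joining the point $(x,t)$, which sits at distance $\alpha r$ from $\partial D$, to the point $(y,s)$, which sits at distance at least $r/2^h$ from $\partial D$, the chain being built using the non-tangentially accessible (corkscrew and Harnack-chain) properties of the Lipschitz domain $\Om$. The hypothesis $t-s\ge r^2/2$ guarantees that the chain can be arranged so that time is strictly increasing along it by a controlled amount at each link, which is exactly what is needed to apply the (one-sided in time) parabolic Harnack inequality: at each step one passes from a point at a later time to a point at an earlier time, losing a universal multiplicative constant $C(\eta,\sigma)$.

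The key steps, in order, are as follows. First, I would normalize by the Lipschitz scaling so that $r$ is comparable to $1$ and work inside $\Psi_{2r}(Q_0,s_0)$, recording that $u\ge 0$ lies in $\mathcal S(\la,\Lambda,a)$ there. Second, using the interior corkscrew condition, I would produce a sequence of parabolic cylinders $Q(\eta,\rho_j)$ centered at points $(z_j,\tau_j)$ with $z_0$ near $x$ and $z_N$ near $y$, where the radii $\rho_j$ roughly double from the scale $\alpha r$ at the $(x,t)$-end up to the scale $r/2^h$ (or $r$) at the $(y,s)$-end; since $2^{-h}\le \alpha$, the number $N$ of such cylinders is bounded by a universal multiple of $h$ (plus a universal additive constant, which can be absorbed into $C^h$ after enlarging $C$). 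Third, I would arrange the time coordinates $\tau_j$ to be strictly increasing with gaps comparable to $\rho_j^2$, which is possible precisely because $t-s\ge r^2/2$ gives enough ``time room''; this uses nothing beyond the geometry of $\Psi_{3r/2}$ and the elementary observation that the total time increment needed, $\sum_j \rho_j^2$, is comparable to $r^2$. Fourth, at each link I would apply Theorem \ref{Harnack} on $Q(\eta,\rho_j)$ (after translating and scaling, using that $\mathcal S(\la,\Lambda,a)$ is invariant under space-time translations and under parabolic dilations) to get $u(z_{j},\tau_{j})\le C\, u(z_{j+1},\tau_{j+1})$ for a universal $C=C(\eta,\sigma)$. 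Composing the $N$ inequalities yields $u(z_0,\tau_0)\le C^N u(z_N,\tau_N)$, and a final application of the interior Harnack inequality on cylinders of the terminal scales relates $u(z_0,\tau_0)$ to $u(x,t)$ from below and $u(z_N,\tau_N)$ to $u(y,s)$ from above (using $t-s\ge r^2/2$ once more for the former). Since $N\le C'h + C''$, this gives \eqref{worsen} with a new universal $C$.

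The main obstacle I anticipate is the careful bookkeeping of the Harnack chain: one must verify that every cylinder $Q(\eta,\rho_j)$ used is genuinely contained in $\Psi_{2r}(Q_0,s_0)$ (so that $u\in\mathcal S(\la,\Lambda,a)$ there and Theorem \ref{Harnack} applies), that consecutive centers lie in the appropriate smaller cylinders of the next link, that the radii can be made to grow geometrically from $\alpha r$ to $r/2^h$ in $O(h)$ steps, and — the delicate point — that the required time shifts can all be accommodated within the time window of $\Psi_{3r/2}$, which is where the hypothesis $t-s\ge r^2/2$ is consumed. All of this is entirely analogous to the linear non-divergence case treated in Lemma 2.2 of \cite{G}; since the only tools invoked (the NTA geometry of Lipschitz domains and the Krylov--Safonov Harnack inequality in the form of Theorem \ref{Harnack}) are available verbatim for functions in $\mathcal S(\la,\Lambda,a)$, the argument of \cite{G} transfers with only notational changes, and I would refer the reader there for the remaining routine details.
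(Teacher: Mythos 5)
Your proposal is correct and takes essentially the same route the paper does, which itself simply defers to Lemma 2.2 of \cite{G}: construct a Harnack chain of $O(h)$ parabolic cylinders joining the two points, using the non-tangentially accessible geometry of the Lipschitz cylinder, and iterate the one-sided parabolic Harnack inequality of Theorem \ref{Harnack} along the chain, with the hypothesis $t-s\ge r^2/2$ supplying the forward time-room the one-sided inequality requires. One small indexing slip: you place $z_0$ near $x$ and $z_N$ near $y$ while also taking $\tau_j$ increasing, but since $t>s$ this is inconsistent with the conclusion $u(z_0,\tau_0)\le C^N u(z_N,\tau_N)$ you want; relabel so the chain starts near $(y,s)$ at scale $\sim r/2^h$ and runs forward in time to a point near $(x,t)$ at scale $\sim \alpha r$, after which composing the links and two terminal Harnack applications give $u(y,s)\le C^{O(h)}u(x,t)$ as required.
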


With Lemmas  \ref{Holder} and \ref{growth} in our hands, we can argue as in the proof of Theorem 2.3 in \cite{G} and establish the following Carleson  estimate, or boundary Harnack principle. 

\begin{thrm}[Carleson estimate for Lipschitz cylinders]\label{Carleson}
Let $(Q_{0}, s_{0}) \in S_{T}$. Then, there exists a universal constant $C=C(m)>0$ such that, if $u\ge 0$ is a function in the class $\mathcal S(\la,\Lambda,a)$ in $\Psi_{2r}(Q_{0}, s_{0})$ which vanishes continuously on $\Delta_{2r}(Q_{0}, s_{0})$, then 
\begin{equation}\label{carleson est}
u(x,t) \leq C u(\overline{A}_{r}(Q_{0}, s_{0})),\ \ \ \ \ (x,t) \in \Psi_{r/8}(Q_{0}, s_{0}),
\end{equation}
where $r$ is small enough depending on $s_0, T- s_0, r_0$.
\end{thrm}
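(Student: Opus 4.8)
\textbf{Proof proposal for Theorem \ref{Carleson}.}

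The plan is to adapt the classical iteration scheme of Caffarelli--Fabes--Mortola--Salsa, in the parabolic non-divergence form carried out in \cite{G}, exploiting that the entire argument only uses the Harnack inequality (Theorem \ref{Harnack}), the boundary H\"older decay (Lemma \ref{Holder}), and the worsening lemma (Lemma \ref{growth}), all of which are available for the class $\mathcal S(\la,\Lambda,a)$. First I would normalize: by Harnack applied along a Harnack chain in the interior of $\Psi_{2r}(Q_0,s_0)$ connecting $\overline A_r(Q_0,s_0)$ to the reference point, it suffices to prove $u(x,t)\le C\,u(\overline A_r(Q_0,s_0))$ with $C$ universal, and after rescaling we may take $r=1$ and $u(\overline A_1(Q_0,s_0))=1$. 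The goal is then a universal a priori bound for $u$ on $\Psi_{1/8}(Q_0,s_0)$.

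The core is a contradiction/iteration argument. Suppose $u$ is large, say $u(x_1,t_1)=M$ for some $(x_1,t_1)\in\Psi_{1/8}(Q_0,s_0)$ with $M$ huge. Using the interior Harnack inequality (Theorem \ref{Harnack}), the value $M$ propagates: points where $u$ is comparable to $M$ cannot lie too deep inside $C_T$, so $(x_1,t_1)$ must be within a controlled distance $\delta_1$ of the lateral boundary, with $\delta_1\to 0$ as $M\to\infty$. Now combine two facts. On the one hand, Lemma \ref{Holder} forces $u$ to decay like $\mathrm{dist}^{\alpha}$ near $S_T$, so being close to the boundary makes $M$ small --- but to exploit this I need the relevant local sup $M_r(u)$ to itself be controlled, which requires first establishing a crude universal bound on $u$ in a slightly larger region by the same Harnack chain reasoning. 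On the other hand, Lemma \ref{growth} lets me ``walk away'' from $(x_1,t_1)$ toward a non-tangential interior point at scale $\sim 1$, advancing in time by $\ge r^2/2$, losing only a factor $C^h$ where $2^{-h}\sim\delta_1$, i.e.\ $h\sim\log(1/\delta_1)$. Thus from $u(x_1,t_1)=M$ I get a non-tangential point $(\tilde x,\tilde t)$ with $u(\tilde x,\tilde t)\ge M\,C^{-h}=M\,\delta_1^{c}$ for a universal $c>0$. The point $(\tilde x,\tilde t)$ is at scale $1$ from the boundary and later in time, hence (by a Harnack chain, using $t-s\ge r^2/2$ so the chain is forward in time and Theorem \ref{Harnack} applies) comparable to $u(\overline A_1(Q_0,s_0))=1$. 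This gives $M\,\delta_1^{c}\le C$, i.e.\ $M\le C\,\delta_1^{-c}$. Feeding this back into the relation $\delta_1 \lesssim$ (decreasing function of $M$) obtained from the H\"older decay, one closes the loop and bounds $M$ by a universal constant. The bookkeeping is exactly that of Theorem 2.3 in \cite{G}; only the inputs have changed.

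The main obstacle, I expect, is not any single estimate but the careful tracking of how the small parameters ($\delta$, $h$, the various radii $r/2$, $3r/2$, $2r$, $r/8$) interact, and in particular ensuring that every invocation of the Harnack inequality Theorem \ref{Harnack} is along a chain that moves \emph{forward} in time (this is why Lemma \ref{growth} is stated with the constraint $t-s\ge r^2/2$, and why the interior reference point uses the time shift $s+2r^2$ in $\overline A_r$). A secondary technical point is the smallness requirement on $r$ relative to $s_0$, $T-s_0$, $r_0$: one must have enough room below $s_0$ (to apply the boundary barrier of Theorem \ref{barrier}, which needs the wedge $(s-r^2,s+r^2)$) and enough room in $C_T$ for the Harnack chains, which is exactly the hypothesis in the statement. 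Since \cite{G} already carries out this scheme in full for linear non-divergence operators and the present paper has assembled the three structural tools in the class $\mathcal S(\la,\Lambda,a)$ --- noting also the translation invariance from Remark \ref{R:pucci} which makes the time shifts harmless --- the adaptation is essentially a matter of citing that argument verbatim, which is what I would do.
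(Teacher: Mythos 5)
Your proposal matches the paper's approach exactly: the paper's proof of Theorem \ref{Carleson} is itself just a citation of Theorem 2.3 in \cite{G}, made available to the class $\mathcal S(\la,\Lambda,a)$ because the three required inputs --- the boundary H\"older decay (Lemma \ref{Holder}), the worsening lemma (Lemma \ref{growth}), and the interior Harnack inequality (Theorem \ref{Harnack}) --- have been re-established in that generality, followed by the same two-stage reduction (first $r\le R_0$, then general $r$ via interior Harnack chains). Since you explicitly defer to the bookkeeping of \cite{G}, the proposal is correct as stated.

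One remark on your expository sketch, in case you intend to write it out rather than cite: the \cite{CFMS}/\cite{G} argument does \emph{not} close a loop between $M$ and $\delta_1$ in a single pass. Your relation from the H\"older decay reads $M\lesssim(\delta_1/r)^{\alpha}M_{2r}(u)$, which is useless unless $M_{2r}(u)$ is a priori bounded --- and bounding $M_{2r}(u)$ \emph{is} the Carleson estimate, so that route is circular; your ``crude universal bound by Harnack chain reasoning'' cannot supply it because Harnack chains only control $u$ at a fixed positive distance from $S_T$. What the argument actually does is iterate: from $u(x_1,t_1)=M$ and the smallness of $d_1=\mathrm{dist}(x_1,\partial\Om)$, apply the H\"older decay at the nearest boundary point and at a shrinking radius to produce $(x_2,t_2)$ with $u(x_2,t_2)\ge M^{1+\beta}/C$ and $d_2\ll d_1$, and so on, yielding a sequence converging to a point of $\Delta_{2r}(Q_0,s_0)$ along which $u$ blows up super-geometrically, contradicting the continuous vanishing of $u$ on $\Delta_{2r}(Q_0,s_0)$. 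This is the scheme you are citing from \cite{G}, but your paragraph as written describes a different (and non-working) feedback.
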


\begin{proof}
We argue as in \cite{G} and  we first  conclude \eqref{carleson est} for $r \leq R_{0}$ where $R_{0}$ is as in Lemma \eqref{Holder}. Consequently, by an application of interior Harnack inequality in Theorem \ref{Harnack}, we conclude that validity of \eqref{carleson est} for   $r$ depending only on $r_0, T-s_0, s_0$ for a possibly larger universal constant  $C>0$, depending also on the Lipschitz character of $\Om$. 

\end{proof}

As an application of the Carleson estimate, we have the following elliptic type Harnack inequality when a solution to \eqref{maine} vanishes continuously on the lateral boundary $S_T$ of the cylinder $C_T$. 

\begin{thrm}\label{boundary max}
Let $u\ge 0$ be a function in the class $\mathcal S(\la,\Lambda,a)$ in $C_T$. For $\delta>0$ set $\Om_{\delta}= \{x \in D\mid \emph{dist}(x, \partial \Om) > \delta\}$, and $C_{\delta, T}= \Om_{\delta} \times (\delta^2, T)$. There exists a universal constant $C>0$, which also depends on $m, \delta, \emph{diam}\ \Om, T$, such that, if $u$ vanishes continuously on $S_T$, then
\begin{equation}\label{max1}
\max_{\Om_{\delta, T}} u \leq C  \min_{\Om_{\delta, T}} u
\end{equation}
\end{thrm}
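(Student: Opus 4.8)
The plan is to deduce Theorem \ref{boundary max} from the Carleson estimate (Theorem \ref{Carleson}), the interior parabolic Harnack inequality (Theorem \ref{Harnack}), and a standard Harnack chain argument along the lines of the analogous elliptic result in \cite{CFMS} or its parabolic counterpart in \cite{G}. The point is that, because $u$ vanishes continuously on the entire lateral boundary $S_T$, the Carleson estimate is available uniformly at every boundary point, and this upgrades the one-sided interior-point bound into a genuine two-sided oscillation control on the compact interior region $\overline{\Om_{\delta,T}}$.

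First I would fix a time level and reduce the estimate to comparing values at two arbitrary points $P_1=(x_1,t_1)$ and $P_2=(x_2,t_2)$ of $\overline{\Om_{\delta,T}}$ with, say, $t_1\le t_2$; by symmetry of the statement \eqref{max1} it suffices to bound $u(P_1)$ by $Cu(P_2)$ when $t_1 < t_2$, and separately handle the equal-time (or backward-in-time) situation. For the forward-in-time comparison I would cover $\overline{\Om_{\delta,T}}$ by finitely many parabolic cylinders $Q(\eta,\rho)$ (with $\rho$ comparable to $\delta$ and $\eta$ fixed, say $\eta=2$) and chain together a bounded number $N=N(m,\delta,\operatorname{diam}\Om,T)$ of applications of Theorem \ref{Harnack}, each of which moves us forward a definite amount in time while staying at interior points, i.e. at distance $\gtrsim\delta$ from $\p\Om$. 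This yields $u(P_1)\le C^N u(A)$ for a reference interior point $A=(x_A, t_A)$ with $t_A$ close to $T$ and $x_A$ deep inside $\Om$; since the number of steps is controlled by the listed parameters, $C^N$ is a constant of the allowed type.

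The remaining and more delicate point is the reverse direction: bounding $u$ at the reference point $A$ (and hence at points with $t$ near $T$, or at points we need to reach "going backward in time") by $Cu(P)$ for an arbitrary $P\in\overline{\Om_{\delta,T}}$ with smaller time. Here the interior Harnack inequality alone does not suffice, since it only propagates information forward in time. The device — exactly as in \cite{G} and \cite{CFMS} — is to use the Carleson estimate: pick a boundary point $(Q,s)\in S_T$ with $s$ slightly less than $t_1$ and $Q$ near $\p\Om$ accessible from $x_1$; apply Theorem \ref{Carleson} in $\Psi_{2r}(Q,s)$ for an appropriate small $r\sim\delta$ to dominate $u$ on $\Psi_{r/8}(Q,s)$ by $Cu(\overline{A}_r(Q,s))$, where $\overline{A}_r(Q,s)$ is a forward-in-time interior corkscrew point; then Harnack-chain forward from $\overline{A}_r(Q,s)$ to $A$. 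Composing, $u(A)\le C u(\overline{A}_r(Q,s)) \cdot C^{N'} \le C' u(\text{point near }(Q,s))$, and one more interior Harnack chain (forward in time, since $s<t_1$) from that point to $P_1$ gives $u(A)\le C'' u(P_1)$. Iterating the two halves over the finite cover produces the uniform two-sided bound \eqref{max1}.

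I expect the main obstacle to be purely bookkeeping: choosing the radii $r$ (small enough for Theorem \ref{Carleson} and Lemma \ref{Holder} to apply near each boundary point, yet comparable to $\delta$ so that corkscrew points genuinely lie in $\Om_{\delta,T}$), and verifying that every Harnack chain used has a bounded number of links, with the bound depending only on $n,\la,\La,a$ and on $m,\delta,\operatorname{diam}\Om,T$. One must also be careful that the time shifts $\pm2r^2$ built into $\overline{A}_r$ and $\underline{A}_r$ keep all intermediate points inside the region $(\delta^2,T)$; shrinking $r$ (equivalently enlarging $N$) handles this. No new analytic input beyond Theorems \ref{Harnack} and \ref{Carleson} is needed — the argument is the standard reduction of an elliptic-type Harnack inequality on a compact interior set to a boundary Harnack principle plus interior chaining, and I would simply refer to the proof of the corresponding Theorem in \cite{G} for the detailed chain constructions.
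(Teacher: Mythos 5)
Your outline omits the first — and, for the hard direction, essential — step of the paper's proof: a maximum principle reduction. The paper first observes that, since $u$ is a viscosity subsolution of $\Lp-\partial_t$ and a viscosity supersolution of $\Lm-\partial_t$, and since constants solve both equations, the comparison principle (Theorem \ref{comparison}) forces
\[
\min_{\overline{\Om_{\delta,T}}} u = \min_{\partial_p \Om_{\delta,T}} u, \qquad \max_{\overline{\Om_{\delta,T}}} u = \max_{\partial_p \Om_{\delta,T}} u,
\]
so that the extrema are attained on the parabolic boundary of the inner cylinder; only then does it invoke the Carleson estimate and the interior Harnack inequality, as in Theorem 2.6 of \cite{G}. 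You try instead to chain directly between two arbitrary interior points, and this is exactly where your argument breaks.

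The chain of inequalities you write for the ``reverse'' direction is not supported by the tools you cite, and in fact runs backwards. From Theorem \ref{Carleson} applied at $(Q,s)$ you get $u(y,\tau)\le C\,u\big(\overline{A}_r(Q,s)\big)$ for $(y,\tau)\in\Psi_{r/8}(Q,s)$, and from a \emph{forward} Harnack chain from $\overline{A}_r(Q,s)$ (time $s+2r^2$) to the later point $A$ you get $u\big(\overline{A}_r(Q,s)\big)\le C^{N'}u(A)$. Composing gives $u(y,\tau)\le C\,C^{N'}u(A)$ on $\Psi_{r/8}(Q,s)$ — i.e.\ it bounds the boundary values by $u(A)$, which is the opposite of what you want. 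Your displayed ``$u(A)\le C\,u(\overline{A}_r(Q,s))\cdot C^{N'}\le C'\,u(\text{point near }(Q,s))$'' reverses both the Harnack chain conclusion and the Carleson estimate. There is no way to turn these around: both estimates transport information forward in time, and a genuine backward-in-time control (which is the content of Theorem \ref{backward harnack}) is only derived later, using Theorem \ref{boundary max} itself. The way out, as in the paper and in \cite{G}, is the maximum principle reduction: once one knows the extrema sit on $\partial_p \Om_{\delta,T}$ — that is, on the bottom cap at the earliest admissible time, or on the lateral set $\partial\Om_\delta\times(\delta^2,T]$ at distance exactly $\delta$ from $\p\Om$ — the Carleson estimate becomes usable precisely because the maximum, when it is not at the initial time, sits close to $S_T$ where $u$ vanishes. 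You should incorporate this reduction and then follow the case analysis of Theorem 2.6 in \cite{G}; the direct ``any two points'' chaining cannot produce the backward inequality.
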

\begin{proof}
We note that the assumption guarantees that $u \in C(\overline{\Om_{\delta,T}})$, and that $u$ is a subsolution to $\Lp - \partial_t$ and a supersolution to $\Lm- \partial_t$ in $\Om_{\delta, T}$ respectively. Let 
\[
k_1= \underset{\partial_{p} \Om_{\delta,T}}{\min}\ u,\ \ \ \ \ \text{and}\ \  \ \ k_2=  \underset{\partial_{p}\Om_{\delta,T}}{\max}\ u.
\]
Clearly, $k_1$ is a solution (hence a subsolution) to $\Lm-\partial_t$ and $k_2$ is a solution (hence a supersolution) to $\Lp - \partial_t$. Since $k_1 \leq u$ and $k_2 \geq u$ on $\partial_{p} \Om_{\delta,T}$, by the comparison principle, Theorem \eqref{comparison},  
we obtain 
\[
k_1 = \underset{ \Om_{\delta,T}}{\min}\ u,\ \ \ \ \   k_2=  \underset{\Om_{\delta,T}}{\max}\ u.
\]
Moreover, by the compactness of $\partial_{p}\Om_{\delta, T}$, there exist $(X_0, T_0) , (X_1, T_1) \in \partial_{p} \Om_{\delta, T}$ such that
\[
u(X_0, T_0) =\underset{\partial_{p} \Om_{\delta,T}}{\min}\ u=  \min_{\Om_{\delta, T}} u,\ \ \ \text{and}\ \ \   u(X_1, T_1)=\underset{\partial_{p} \Om_{\delta,T}}{\max}\ u = \max_{\Om_{\delta, T}} u.
\]
 The rest of the proof now follows from Theorem \ref{Carleson} and the interior Harnack inequality in Theorem \ref{Harnack}, as in the proof of Theorem 2.6 in \cite{G}. 
 
\end{proof}

\section{Comparison theorem for non-negative solutions in a $C^{1,1}$ cylinder}\label{S:C11}

In this section we establish a  comparison theorem for functions in the class $\mathcal S(\la,\Lambda, a)$ in $C_T = \Om\times (0,T)$, where $\Om\subset \Rn$ is a $C^{1,1}$ domain. Our main result, Theorem {Boundary comparison theorem}, should be compared with Proposition 2.1 in \cite{ASS} in which the authors establish its elliptic counterpart for $C^2$ domains.

We recall that a $C^{1,1}$ domain is characterized by the existence of a uniform exterior and interior tangent ball. The largest radius of such balls determines what we call the $C^{1,1}$ character of the domain $\Om$. Our proof of the comparison theorem will follow by adapting  the barriers in \cite{G}  to the fully nonlinear case, and arguing similarly relying on the natural sublinearity, in place of linearity.   The following is our main result.

\begin{thrm}[Local comparison theorem for $C^{1,1}$ cylinders]\label{Boundary comparison theorem}
Let $(Q_{0}, s_{0}) \in S_{T}$. Then, there exists a universal $r_0>0$, depending also on the $C^{1,1}$ nature of $\Om$ and on $s_0, T- s_0$, and a universal constant $C>0$, depending also on the $C^{1,1}$ character of $\Om$, such that if for $0<r \le r_0$, $u, v \ge 0$ belong to $\mathcal S(\la,\Lambda,a)$ in $\Psi_{2r}(Q_{0}, s_{0})$, and $u=v=0 $ continuously on $\Delta_{2r}(Q_{0}, s_{0})$, then for all $(x,t) \in \Psi_{r}(Q_{0}, s_{0})$, one has
\begin{equation}\label{bc}
\frac{1}{C} \frac{ u(\underline{A}_{r}(Q_{0}, s_{0}))}{v(\overline{A}_{r}(Q_{0}, s_{0}))}\leq \frac{u(x,t)}{v(x,t)}\leq C \frac{ u(\overline{A}_{r}(Q_{0}, s_{0}))}{v(\underline{A}_{r}(Q_{0}, s_{0}))}.
\end{equation}
\end{thrm}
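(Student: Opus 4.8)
The plan is to establish the two-sided bound \eqref{bc} by proving the single inequality
\[
\frac{u(x,t)}{v(x,t)} \leq C \frac{u(\overline{A}_r(Q_0,s_0))}{v(\underline{A}_r(Q_0,s_0))}, \qquad (x,t)\in \Psi_r(Q_0,s_0),
\]
since the reverse inequality then follows by interchanging the roles of $u$ and $v$. Fix the point $(Q_0,s_0)\in S_T$ and work in the local coordinates for which $\Om$ has an interior and an exterior tangent ball at $Q_0$ of radius comparable to the $C^{1,1}$ character. The strategy, following \cite{G}, is: (i) bound $u$ from above near the boundary by a multiple of the value $u(\overline A_r(Q_0,s_0))$ using the Carleson estimate Theorem \ref{Carleson}; (ii) bound $v$ from below near the boundary by a multiple of $v(\underline A_r(Q_0,s_0))$ by constructing an explicit subsolution barrier that vanishes on the boundary portion and is positive inside, then invoking the comparison principle Corollary \ref{comp} (valid for $\Lp-\partial_t$ and $\Lm-\partial_t$ by Remark \ref{R:comp}); (iii) divide the two estimates. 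The ratio of the interior reference values is absorbed into the universal constant via the interior Harnack inequality Theorem \ref{Harnack}.

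First I would carry out step (i). Since $u\in \mathcal S(\la,\Lambda,a)$ in $\Psi_{2r}(Q_0,s_0)$ and vanishes continuously on $\Delta_{2r}(Q_0,s_0)$, Theorem \ref{Carleson} gives $u(x,t)\le C\, u(\overline A_r(Q_0,s_0))$ for $(x,t)\in\Psi_{r/8}(Q_0,s_0)$; by Harnack chaining (Theorem \ref{Harnack}) the same bound holds on all of $\Psi_r(Q_0,s_0)$, up to enlarging $r_0$ downward and the constant $C$. The main work is step (ii), the lower barrier for $v$. Here is where the $C^{1,1}$ hypothesis is essential. Using the interior tangent ball $B=B_\rho(x_B)\subset\Om$ at $Q_0$, one adapts the barrier of \cite{G} (Theorem 1.12 there for linear non-divergence parabolic operators) to the Pucci operator $\Lm$: one seeks a function of the form $\phi(x,t) = \eta\big(e^{-\gamma|x-x_B|^2} - e^{-\gamma\rho^2}\big) - \kappa(t-s_0+ \text{shift})$ (or a comparable annular barrier in a half-neighborhood near $Q_0$) with $\phi\le v$ on the parabolic boundary of a suitable region $W\subset \Psi_{2r}(Q_0,s_0)\cap C_T$, $\phi=0$ on the tangent-ball portion of $S_T$, and $\Lm\phi - \phi_t \ge 0$ in $W$. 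Because the class $\mathcal S(\la,\Lambda,a)$ is closed under space translations and rotations (Remark \ref{R:pucci}), the barrier built at $Q_0$ transports to nearby boundary points without change of constants, which is what makes the argument genuinely local and universal. The key computational point is that $\Lm$ is only \emph{sublinear} rather than linear: one must check that the exponential barrier's Hessian has enough negative-direction curvature that $\Mm(D^2\phi)$ dominates the lower-order and time terms — exactly the estimate that works because for the exponential bump $D^2\phi$ has one large negative eigenvalue (radial) and $n-1$ controlled ones, so $\Mm(D^2\phi)\ge \la(\text{radial term}) - \La(n-1)(\text{tangential terms})$, which is large and positive near the sphere $\p B$ after choosing $\gamma$ large. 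Comparison (Corollary \ref{comp}) then yields $v(x,t)\ge c\, v(\underline A_r(Q_0,s_0))\,\phi(x,t)$ on $W$, and since $\phi$ is bounded below by a positive multiple of $\big(\,\mathrm{dist}(x,\p\Om) + |t-s_0|^{1/2}\,\big)/r$ in a smaller neighborhood, while $u$ enjoys the complementary H\"older-type \emph{upper} bound $u(x,t)\le C(|x-Q_0|+|t-s_0|^{1/2})^\alpha/r^\alpha \cdot M$ from Lemma \ref{Holder} — wait, more precisely one pairs the Carleson bound on $u$ with the linear lower bound on $v$ — one divides to conclude.

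Finally I would handle the bookkeeping: reduce $r$ to be below the universal $R_0$ of Lemma \ref{Holder} and Theorem \ref{Carleson} and small enough in terms of $s_0, T-s_0, r_0$ so that $\Psi_{2r}(Q_0,s_0)\subset C_T$ except along $\Delta_{2r}$; relate $u\big(\overline A_r(Q_0,s_0)\big)$ and $v\big(\underline A_r(Q_0,s_0)\big)$ to the values of $u,v$ at a common corkscrew point via Theorem \ref{Harnack} (this costs only a universal factor since $\overline A_r, \underline A_r$ and the corkscrew point are all at mutual parabolic distance comparable to $r$ with $\mathrm{dist}(\cdot,\p\Om)\sim r$); and note that the same barrier/Carleson pair applied with $u$ and $v$ swapped gives the left inequality in \eqref{bc}. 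The main obstacle I anticipate is step (ii): verifying that the \cite{G}-type barrier, which was designed for linear operators $\sum a_{ij}D_{ij}$, still satisfies $\Lm\phi-\phi_t\ge 0$ when $\Lm=\Mm(D^2\cdot)-a|D\cdot|$ is merely sublinear — one cannot simply quote the linear computation, and one must re-run it tracking that the extremal operator picks out the worst ellipticity constant in each eigendirection and that the first-order term $-a|D\phi|$ is dominated; the $C^{1,1}$ tangent-ball geometry is exactly what guarantees the radial second derivative of $\phi$ has the right sign and size for this to go through uniformly.
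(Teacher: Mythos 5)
Your lower-barrier construction for the Pucci operator (interior tangent ball, exponential bump, checking that $\Mm(D^2\phi)$ has enough positive radial curvature to dominate the gradient and time terms) is the right adaptation of the argument in \cite{G}, and your observation that sublinearity of $\Lm$ requires redoing the eigenvalue computation rather than quoting the linear case is exactly the point. But the scheme ``Carleson upper bound on $u$ paired with a linear lower barrier for $v$'' cannot close. Carleson gives $u(x,t)\le C\,u(\overline{A}_r)$, a \emph{constant} bound, while the barrier gives $v(x,t)\ge c\,\big(\mathrm{dist}(x,\partial\Om)/r\big)\,v(\underline{A}_r)$, which \emph{vanishes linearly}. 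Dividing produces
\[
\frac{u(x,t)}{v(x,t)} \;\lesssim\; \frac{r}{\mathrm{dist}(x,\partial\Om)}\cdot\frac{u(\overline{A}_r)}{v(\underline{A}_r)},
\]
which blows up as $(x,t)$ approaches $S_T$, so it does not prove \eqref{bc}, which must hold uniformly up to $\Delta_r(Q_0,s_0)$. You briefly flirt with using the H\"older bound of Lemma \ref{Holder} instead of Carleson on $u$, but that gives decay of order $(\mathrm{dist}/r)^\alpha$ with $\alpha<1$ in general, which still loses against the linear vanishing of $v$.

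The missing ingredient, and the actual heart of the paper's proof, is a second barrier: a \emph{supersolution} $f$ of $\Lp-\partial_t$ built on the annular region exterior to the \emph{exterior} tangent ball at $Q$ (the point $\xi_2(Q)=Q-\tfrac{r}{16}\nu_Q$ lies outside $\Om$). The function $f(x,t)=1-\gamma\big(r^2/R_1(x,t)^2\big)^{k}$ with $R_1^2=|x-\xi_2(Q)|^2+|t-s|$ vanishes at $(Q,s)$, equals a fixed positive constant on $\partial P_{r/8}(\xi_2(Q),s)$, and decays \emph{linearly} in $\mathrm{dist}(x,\partial\Om)$. The Carleson estimate is used only to supply the boundary datum ($u/u(\overline{A}_r)\le C$ on the spherical portion of the boundary), and then comparison against $C'f$ transfers the linear vanishing rate to $u$. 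This gives the two-sided linear pinching $C_1\tfrac{|x-Q|}{r}\,u(\underline{A}_r)\le u(x,s)\le C_2\tfrac{|x-Q|}{r}\,u(\overline{A}_r)$, the same for $v$, and now the factor $|x-Q|/r$ cancels when you form $u/v$. Without the upper barrier there is no cancellation, and the argument fails precisely where it matters — at the boundary. Note also that you must verify $\Lp f - f_t<0$ for the sublinear $\Lp$, which is the mirror computation to the one you ran for $\Lm\phi$: here the outward radial second derivative of $f$ is the large negative eigenvalue, and one chooses $k$ large and $r<\tfrac{1}{4a}$ to absorb $a|Df|$ and $|f_t|$.
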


\begin{proof}
For $Q \in \partial \Om$, let $\nu_{Q}$ be the interior unit normal to $\partial \Om$ at $Q$  and for $r > 0 $ set
\begin{equation}\label{xi12}
\xi_1(Q)= Q + \frac{r}{4} \nu_{Q}  \ \ \ \    \xi_2 (Q) = Q - \frac{r}{16} \nu_{Q}.
\end{equation}
Let us define for $\delta>0$
\[
P_{\delta}(x,\tau)= \{(y,t)\mid |x-y|^2 + |t-\tau| \leq   \delta^2\}.
\]
We now choose and fix $r_0>0$, depending on the $C^{1,1}$ character of $\Om$, so that for $0<r<r_0$, and each $(Q,s) \in \Delta_{r}(Q_{0}, s_{0})$ we have
\begin{equation}\label{lowernu}
< \nu_{Q}, \nu_{Q_{0}}>   \geq \frac{1}{4},\ \ \ \ \text{for}\ \  Q \in \partial \Om, \ \ |Q- Q_{0}| \leq r,  
\end{equation}
and
\begin{equation}
\ P_{\frac r4}(\xi_{1}(Q),s) \cap S_{T}= P_{\frac{r}{16}}(\xi_{2}(Q), s) \cap S_T= \{(Q,s)\}.
\end{equation}

From now on, $(Q,s)$ is assumed to be a arbitrary point of $\Delta_{r/8}(Q_{0}, s_{0})$. We consider the set
\[
S_{r}(Q,s)= P_{\frac r8}(Q,s) \cap P_{\frac r4}(\xi_1(Q), s).
\]
Adapting the idea in \cite{G}, we wish to find two suitable barriers $h$  and $f$ such that  
\[
\Lm (h) - h_t> 0\ \ \ \  \text{in}\ \  S_{r}(Q,s),
\]
 and 
 \[
 \Lp(f) - f_t< 0\ \ \ \ \text{in}\ \  \Psi_{r}(Q_{0}, s_{0}) \cap P_{r/8}(\xi_2 (Q),s).
 \]
 With these inequalities in place, by an application of the comparison theorem (Corollary \ref{comp} above),  we then conclude the following estimates
\begin{equation}\label{b1}
\frac{u}{u(\underline{A}_{r}(Q_0, s_0)) } \geq h\  \ \ \ \text{in}\ \ S_r(Q,s),
\end{equation} 
 and
\begin{equation}\label{b2}
\frac{u}{u(\overline{A}_{r}(Q_0, s_0)) } \leq C f \ \ \ \ \text{in}\ \   \Psi_{r} \cap P_{r/8}(Q,s).
\end{equation}
We emphasize that \eqref{b2} is the crucial step. To accomplish it we will use the Carleson estimate 
\eqref{carleson est} in Theorem \ref{Carleson} above.

We assume henceforth that $r>0$ is chosen sufficiently small (depending on $r_0$, $s_0$ 
and $T-s_0$) so that all the forthcoming arguments make sense.
Observe that if  
\[
t_1= \inf \{t\mid (x,t) \in S_{r}(Q,s)\},\ \ \ \ \ t_2= \sup \{t\mid(x,t) \in S_{r}(Q,s)\},
\]
then there exist $\alpha_1, \alpha_2  > 0$ such that
\begin{equation}\label{e1}
s_{0} - 2r^2 + \alpha_1 r^2 < t_1 < t_2 < s_0 + 2r^2 - \alpha_2 r^2.
\end{equation}
We next consider the sets
\[
\Phi^{1}_{r} (Q, s) = \partial S_{r}(Q,s) \cap \partial P_{\frac r4}(\xi_1(Q),s),\ \ \ \ \ \Phi^{2}_{r} (Q,s)= \partial S_{r}(Q,s) \cap \partial P_{\frac r8}(Q,s). 
\] 
There exists $\alpha_3 > 0 $ such that
\begin{equation}\label{e2}
\text{dist}(\Phi^{2}_r, S_T) \geq \alpha_3 r.
\end{equation}
Therefore, from \eqref{e1}, \eqref{e2}  and the Harnack inequality in Theorem \ref{Harnack} above, we obtain 
\begin{equation}\label{e3}
u(x,t) \geq C u(\underline{A}_{r}(Q_0, s_0)),\ \ \ \ \ (x,t) \in \Phi^{2}_{r}(Q,s).
\end{equation}
Now, let $R(x,t)^2 \overset{def}{=} |x-\xi_1(Q)|^2 + |t-s|$. Observe that on $\partial P_{r/4} (\xi_1(Q), s)$ we have $R^2= r^2/16$, and that
\begin{equation}\label{min}
\min_{S_r(Q,s)} \frac{|x- \xi_1 (Q)|^2}{r^2} = \frac{1}{64}
\end{equation}
Now we choose $\alpha > 0 $ large enough depending on $\lambda , \Lambda , n$.
Let 
\[
C_0 = \max_{\Phi^{2}_{r}(Q,s)} \left(\exp\left(-\frac{\alpha R^2}{r^2}\right) - \exp\left(- \frac{\alpha}{16}\right)\right)  > 0.
\]
We stress that, in view of \eqref{e2}, $C_0$ is independent of $r>0$. Pick $\gamma > 0 $ such that
\begin{equation}
\gamma < \frac{C}{2C_0},
\end{equation}
where $C>0$ is the constant in \eqref{e3} above.
We now define 
\[
h(x,t) = \gamma \left(\exp\left(- \frac{\alpha R^2}{r^2}\right) - \exp\left(- \frac{\alpha}{16}\right)\right). 
\]
A verification shows that $h=0 $ on $\Phi^{1}_r(Q,s)$. Moreover, from the choice of $\gamma$ and \eqref{e3}, we have that
\begin{equation}
h(x,t) \le \gamma C_0 < \frac{C}{2} < C \leq \frac{u(x,t)}{ u(\underline{A}_{r}(Q_0, s_0))},\ \ \ (x,t) \in \Phi^{2}_{r}(Q,s).
\end{equation}
Now, for $(x,t) \in S_r(Q,s)$ we have
\begin{equation}
Dh = - \frac{2 \alpha \gamma}{r^2}\left(x - \xi_1(Q)\right)\exp\left( - \frac{\alpha R^2}{r^2}\right),
\end{equation}
and
\begin{equation}
D_{ij} h = \left(\frac{4 \alpha^2 \gamma}{r^4} \left(x_{i} - \xi_{1}(Q)_{i}\right) \left(x_{j} - \xi_{1}(Q)_{j}\right) - \frac{2 \alpha \gamma}{r^2} \delta_{ij}\right) \exp\left( -\frac{\alpha  R^2}{r^2}\right).
\end{equation}
In view of \eqref{pucci} above we thus find
\begin{equation}
\Mm(D^2 h) = \frac{2\alpha \gamma}{r^2} \underset{A\in [[\la,\Lambda]]}{\inf}\ \left(\frac{2 \alpha}{r^2} \sum_{i,j=1}^n a_{ij} (x_i - \xi_{1}(Q)_{i}) (x_{j} - \xi_{1}(Q)_{j}) - \Sigma  \ a_{ii}\right) \exp\left( - \frac{\alpha R^2}{r^2}\right).
\end{equation}
Therefore,  from \eqref{min}, and by choosing $\alpha$ large enough (depending also on the $C^{1,1}$ character of $\Om$), we have in $S_r(Q,s)$
\begin{align}\label{Mnlower}
\Mm (h) & \geq  \frac{2\alpha \gamma}{r^2} \left(\frac{2 \alpha \lambda}{r^2} |x- \xi_1(Q)|^2 - n\Lambda\right) \exp\left( - \frac{\alpha R^2}{r^2}\right)
\\
& \geq \frac{2\alpha \gamma} {r^2}   \exp\left(- \frac{\alpha R^2}{r^2}\right).
\notag
\end{align}
Hence, from \eqref{lpm} and \eqref{Mnlower} we find in $S_r(Q,s)$
\begin{equation}
\Lm (h) \geq \frac{2\alpha \gamma} {r^2} \left(1 - a  |x - \xi_1(Q)|\right)  \exp\left(- \frac{\alpha R^2}{r^2}\right).
\end{equation}
Now, if $(x,t)\in S_r(Q,s)$ we have  $| x - \xi_1(Q)| \leq r $,  and therefore if we choose $0<r\le \frac{1}{4a}$ we obtain
\begin{equation}
\Lm (h) \geq \frac{3\alpha \gamma} {2 r^2} \exp\left( -\frac{\alpha R^2}{r^2}\right).
\end{equation}
On the other hand, we have  
\[
h_{t} = - \frac{\alpha \gamma} {r^2} \text{sgn}(t-s) \exp\left(- \frac{\alpha R^2}{r^2}\right).
\]
Therefore,  
\begin{equation}\label{h}
\Lm(h) - h_{t} \geq \frac{ \alpha \gamma}{r^2} \exp\left(- \frac{\alpha R^2}{r^2}\right)(3/2 - 1) \geq 0,
\end{equation}
which shows that $h$ is a subsolution to $\Lm - \partial_t$ in the set $(t \neq s)$, i.e., almost everywhere in $S_r(Q,s)$. Hence, $h$ is a strong subsolution in $S_r(Q,s)$. By Proposition 2.11 in \cite{CKS}, we conclude that $h$ is also a viscosity subsolution of $\Lm - \partial_t$ in $S_r(Q,s)$. Since $h (x,t) \leq \frac{u(x,t)}{u(\underline{A}_{r}(Q_0, s_0))}$ on $\partial S_r$,  and by the assumption that $u\in \mathcal S(\la,\Lambda,a)$, $u$ is a supersolution of the same operator, by the comparison principle Corollary \ref{comp} applied to $\Lm - \partial_{t}$, we obtain that
\begin{equation}\label{est2}
h(x,t) \leq \frac{u(x,t)}{u(\underline{A}_{r}(Q_0, s_0))}\ \ in\ \ S_{r}(Q,s).
\end{equation}
At this point, by the argument on p. 290 in \cite{G}, which uses also \eqref{lowernu} above, we conclude that there exists $C_{1} > 0$, depending on $n, \lambda, \Lambda$ and the $C^{1,1}$ character of $\partial \Om$, such that
\begin{equation}\label{est1}
h(x,s) \geq C_{1} \frac{|x-Q|}{r}
\end{equation}
when $x= Q + \delta r \nu_{Q_0}, \delta \in[0,1/32]$, see (3.20) in \cite{G}.
From \eqref{est2}, \eqref{est1}, it follows that for such $x$
\begin{equation}\label{e4}
C_{1} \frac{|x-Q|}{r}u(\underline{A}_{r}(Q_0, s_0)) \leq u(x,s).
\end{equation} 
The estimate \eqref{e4} thus establishes the linear growth from below near the lateral boundary $S_T$ for $u$. 

Our next objective is proving a similar estimate, but from above.
For this, we consider the set $\Psi_{r}(Q_0, s_0) \cap P_{r/8}(\xi_2(Q),s)$, and define 
\[
R_{1}(x,t)^2= |x-\xi_2(Q)|^2 + |t-s|,
\]
where $\xi_2 (Q)$ is defined in \eqref{xi12} above.
If $(x,t) \in \Psi_{r}(Q_0, s_0) \cap P_{r/8}(\xi_2(Q), s)$, we notice that for some constant $C_5>0$ we have
\begin{equation}\label{e5}
\frac{|x-\xi_2(Q)|^2}{R_{1}(x,t)^{2}} \geq C_5.
\end{equation}
For $k>0$ to be subsequently chosen large enough, depending on $n, \lambda, \Lambda, C_{5}$, we define
\[
f(x,t) = 1 - \gamma\left(\frac{r^2}{R_{1}(x,t)^2}\right)^{k},
\]
with $\gamma= \left(\frac{1}{256}\right)^{k}$. Since $R_1(Q,s)^2 = \frac{r^2}{256}$, this choice of $\gamma$ ensures that  $f(Q,s) = 0$. In the next formulas, to abbreviate the notation we write $R_1$ instead of $R_1(x,t)$. A computations gives 
\[
Df =  2 \gamma k \left(\frac{r^2}{R_{1}^2}\right)^{k} \frac{x - \xi_2 (Q)}{R_1 ^2},
\]
\[
D_{ij}f=  - 4 \gamma k \left(\frac{r^2}{R_{1}^2}\right)^{k} \frac{k+1}{R_{1}^4} (x - \xi_{2}(Q))_{i} (x - \xi_{2}(Q))_{j} + 2 \gamma k \left(\frac{r^2}{R_{1}^2}\right)^{k} \frac{\delta_{ij}}{R_1^2}.
\]
Therefore, \eqref{pucci} above gives
\begin{equation}\label{e7}
\Mp (f)=  \frac{2 \gamma k}{R_1^2} \left(\frac{r^2}{R_{1}^2}\right)^{k}  \underset{A\in [[\la,\Lambda]]}{\sup} \left\{-\frac{2(k+1)}{R_{1}^2} a_{ij} (x - \xi_{2}(Q))_{i} (x - \xi_{2}(Q))_{j} + a_{ii}\right\}
\end{equation}
From \eqref{e7} we easily obtain
\[
\Mp (f) \leq \frac{2 \gamma k}{R_1^2} \left(\frac{r^2}{R_{1}^2}\right)^{k} \left\{-\frac{2(k+1)}{R_1^2} \lambda |x- \xi_2(Q)|^2 + n \Lambda\right\}.
\]
Because of \eqref{e5}, we can now choose $k$ large enough so that, for $(x,t) \in \Psi_{r}(Q_0, s_0) \cap P_{r/8}(\xi_2(Q), s)$, we have
\[
\Mp (f)  \leq -  \frac{ 2\gamma k}{R_1^2} \left(\frac{r^2}{R_{1}^2}\right)^{k}.
\]
This estimate and \eqref{lpm} above give 
\begin{equation}\label{e8}
\Lp(f) \leq  -  \frac{2 \gamma k}{R_1^2} \left(\frac{r^2}{R_{1}^2}\right)^{k} \left(1- a | x - \xi_2 (Q)|\right).  
\end{equation}
Observe now that, for $(x,t) \in \Psi_{r}(Q_0, s_0) \cap P_{r/8}(\xi_2(Q), s)$, we have $|x - \xi_2 (Q)| \leq  R_1 < r$. Hence, if $0<r<\frac{1}{4a}$, we obtain from \eqref{e8} 
\[
\Lp(f) \leq  -  \frac 32 \frac{\gamma k}{R_1^2} \left(\frac{r^2}{R_{1}^2}\right)^{k}, \ \ \ \ \ \text{in}\ \Psi_{r}(Q_0, s_0) \cap P_{r/8}(\xi_2(Q), s).
\]
Since 
\[
f_{t} = - \frac{\gamma k}{R_1^2} \left(\frac{r^2}{R_{1}^2}\right)^{k} \text{sgn}(t-s),
\]
we conclude that $\Lp(f) - f_{t} < 0$ when $(t\neq s)$ in $\Psi_{r}(Q_0, s_0) \cap P_{r/8}(\xi_2(Q), s)$. Thus, $f$ is a  strong supersolution, and hence a viscosity supersolution of $\Lp(f) - f_{t}$, by  Proposition 2.11 in \cite{CKS}. Moreover, on $\Psi_{r}(Q_0,s_0) \cap \partial P_{r/8}(\xi_2(Q), s)$, we have
\begin{equation}\label{e9}
f(x,t)= 1 - 2^{-2k},
\end{equation}
while on $\Delta_{r}(Q_0,s_0) \cap P_{r/8}(\xi_2(Q), s)$, we have $f(x,t) \geq 0 $ (as $(Q,s)$ is the closest point where it is zero). The crucial step is to now invoke the Carleson estimate, Theorem \ref{Carleson} above, which gives 
\begin{equation}\label{e10}
\frac{u(x,t)}{u (\overline{A}_r(Q_0,s_0)} \leq C,\ \ \ \ \ (x,t)\in \Psi_{r}(Q_0, s_0) \cap P_{r/8}(\xi_2(Q), s).
\end{equation}
Thus, if $C' = C(1 - 2^{-2k})^{-1}$, then $\frac{u} {u(\overline{A}_r(Q_0,s_0)}\leq  C' f$ on $\partial \left(\Psi_{r}(Q_0,s_0)\cap P_{r/8}(\xi_2(Q),s)\right)$. Since from what we have proved above the function $C'f $ is a viscosity supersolution of $\Lp - \partial_t$ in $\Psi_{r}(Q_0, s_0) \cap P_{r/8}(\xi_2(Q), s)$, whereas, by the hypothesis that $u\in \mathcal S(\la,\Lambda,a)$ in $\Psi_{2r}(Q_{0}, s_{0})$, the function $\frac{u} {u (\overline{A}_r(Q_0,s_0)}$ is a subsolution of the same operator in that set, by Corollary \ref{comp} we conclude that
\begin{equation}\label{e11}
\frac{u} {u(\overline{A}_r(Q_0,s_0)} \leq C' f\ \ \ \ \ \ \ \text{in}\ \Psi_{r}(Q_0,s_0)\cap P_{r/8}(\xi_2(Q),s).
\end{equation}
The estimate \eqref{e11}, combined with the equation (3.33) in \cite{G}, shows that 
\begin{equation}\label{e12}
u(x,s) \le C_{2} \frac{|x-Q|}{r}u(\overline{A}_{r}(Q_0, s_0)),
\end{equation} 
when $x= Q + \delta r \nu_{Q_0}, \delta \in[0,1/32]$. Combining \eqref{e4} and \eqref{e12} we have proved that
\begin{equation}\label{e13}
C_{1} \frac{|x-Q|}{r}u(\underline{A}_{r}(Q_0, s_0)) \le u(x,s) \le C_{2} \frac{|x-Q|}{r}u(\overline{A}_{r}(Q_0, s_0)),
\end{equation} 
when $x= Q + \delta r \nu_{Q_0}, \delta \in[0,1/32]$. In other words \eqref{e13} holds
all over a slice of $\Psi_r(Q_0,s_0)$ which is attached to the lateral boundary $S_T$, and goes inside $C_T$ for a depth
proportional to $r$. The important information \eqref{e13} tells us that, in a $C^{1,1}$ cylinder, any nonnegative function in the class $\mathcal S(\la,\Lambda,a)$ in $\Psi_{2r}(Q_0,t_0)$, which vanishes continuously on $\Delta_{2r}(Q_{0}, s_{0})$, \emph{must vanish at precisely a linear rate}.
 
We next apply \eqref{e13} to the function $v$, obtaining
\begin{equation}\label{e14}
C_{2} \frac{r}{|x-Q|}\frac{1}{v(\overline{A}_{r}(Q_0, s_0))} \le \frac{1}{v(x,s)} \le C_{1} \frac{r}{|x-Q|} \frac{1}{v(\underline{A}_{r}(Q_0, s_0))},
\end{equation} 
when $x= Q + \delta r \nu_{Q_0}, \delta \in[0,1/32]$. The estimates \eqref{e13}, \eqref{e14} immediately imply the desired conclusion \eqref{bc} for points $(x,s)\in \Psi_r(Q_0,t_0)$ such that  $x= Q + \delta r \nu_{Q_0}, \delta \in[0,1/32]$. This allows us to now move away from the lateral boundary $S_T$ by a distance proportional to $r$. Once this is accomplished we can apply the interior Harnack inequality in Theorem \ref{Harnack} above, and thus obtain \eqref{bc} in the remaining portion of the set $\Psi_r(Q_0,t_0)$.
Hence, we obtain \eqref{bc} in $\Psi_r(Q_0,t_0)$ when $r$ additionally satisfies the assumption  $r < \frac{1}{4a}$. Consequently, by an application of the interior Harnack inequality in Theorem \ref{Harnack} to both $u$ and $v$, we conclude that \eqref{bc} continues to be valid for all $r$ small enough depending only on $C^{1,1}$ character of $\Om$, $s_0, T-s_0$ and for a possibly larger universal constant  $C>0$.

\end{proof}

As an application of Theorems \ref{boundary max} and \ref{Boundary comparison theorem}, we obtain the following global comparison theorem for nonnegative functions in the class $\mathcal S(\la,\Lambda,a)$ which vanish on the lateral boundary $S_T$. This result is a consequence of  Theorem \ref{boundary max} and Theorem \ref{Boundary comparison theorem}. Since its proof is identical to that of Theorem 3.2 in \cite{G}, we omit it altogether.

\begin{thrm}[Global comparison theorem]\label{global1}
 Let $C_T = \Om\times (0,T)$ be a $C^{1,1}$ cylinder, and $u, v \ge 0 $  be  functions in $\mathcal S(\la,\Lambda,a)$ in $C_T$, such that $u,v= 0 $ continuously on $S_T$. Fix a point $X_0 \in \Om$, and for a universal $\delta \in (0, \sqrt T/2)$  suitably small (depending on the $C^{1,1}$ character of $\Om$), set $F = \Om \times (2 \delta^2, T- \delta^2)$. There is a universal constant $C>0$, depending also on $\delta$, diam $\Om$, and $T$, such that for all $(x,t)\in F$ one has
\begin{equation}\label{global}
\frac{1}{C} \frac{u(X_0, T)}{v(X_0, T)} \leq \frac{u(x,t)}{v(x,t)} \leq C  \frac{u(X_0, T)}{v(X_0, T)}.
\end{equation}
\end{thrm}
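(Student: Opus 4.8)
The plan is to decompose $F$ into a piece lying at a fixed positive distance from the lateral boundary $S_T$ and a complementary piece near $S_T$, to handle the near-boundary piece with the local comparison Theorem \ref{Boundary comparison theorem}, and to reduce every quantity that appears to the two numbers $u(X_0,T)$ and $v(X_0,T)$ by means of the elliptic-type Harnack inequality of Theorem \ref{boundary max} (supplemented, in order to reach the point $(X_0,T)$, by a fixed chain of interior Harnack inequalities from Theorem \ref{Harnack}).

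Concretely, first I would fix the smallness of $\delta$: choose it so that, with $r:=\delta/2$, the scale $r$ is admissible in Theorem \ref{Boundary comparison theorem} (that is $r\le r_0$ and $r<1/(4a)$), and so that for every $Q\in\p\Om$ and every $s\in[2\delta^2,\,T-\delta^2]$ the parabolic neighborhood $\Psi_{2r}(Q,s)$ is contained in $C_T$. Since $u,v\ge 0$ and $u=v=0$ continuously on all of $S_T$, the hypotheses of Theorem \ref{boundary max} are satisfied; applying it with a parameter $\delta'\in(0,\delta)$ small enough that $X_0\in\Om_{\delta'}$ shows that all the values of $u$ on the corresponding region $\Om_{\delta',T}$ of Theorem \ref{boundary max} are mutually comparable, and likewise for $v$, the constants depending on the Lipschitz character of $\Om$, on $\delta$, $\mathrm{diam}\,\Om$, $T$ (and on $X_0$ through $\delta'$). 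Writing $a_0=u(X_0,T)$ and $b_0=v(X_0,T)$, we conclude that every value of $u$ on $\Om_{\delta',T}$ is comparable to $a_0$ and every value of $v$ there is comparable to $b_0$.

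Next I would split $F=F_1\cup F_2$, where $F_1=\{(x,t)\in F:\ \mathrm{dist}(x,\p\Om)\ge c\delta\}$ for a small universal $c$ chosen so that $F_1\subset\Om_{\delta',T}$, and $F_2=F\setminus F_1$. On $F_1$ the previous step immediately gives that $u(x,t)/v(x,t)$ is comparable to $a_0/b_0$, which is \eqref{global}. For $(x,t)\in F_2$ let $Q\in\p\Om$ realize $\mathrm{dist}(x,\p\Om)$ and set $s=t$; by the calibration of $\delta$ one has $(x,t)\in\Psi_r(Q,s)$ and Theorem \ref{Boundary comparison theorem} applies, yielding
\begin{equation*}
\frac1C\,\frac{u(\underline{A}_r(Q,s))}{v(\overline{A}_r(Q,s))}\ \le\ \frac{u(x,t)}{v(x,t)}\ \le\ C\,\frac{u(\overline{A}_r(Q,s))}{v(\underline{A}_r(Q,s))}.
\end{equation*}
The four reference points $\overline{A}_r(Q,s)$ and $\underline{A}_r(Q,s)$ lie at distance comparable to $r\simeq\delta$ from $\p\Om$, with time coordinate $s\pm 2r^2$, hence inside $\Om_{\delta',T}$ after shrinking $c$ and $\delta'$ once more if needed; so by the first step $u$ at those points is comparable to $a_0$ and $v$ there is comparable to $b_0$. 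Substituting into the displayed inequality shows that $u(x,t)/v(x,t)$ is comparable to $a_0/b_0$ on $F_2$ as well, and combining the two cases finishes the proof.

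The one genuinely delicate point — and the place requiring care — is the geometric calibration of $\delta$: it must be chosen uniformly, in terms of the $C^{1,1}$ character of $\Om$ together with $a$ and $T$, so that on the one hand the local comparison Theorem \ref{Boundary comparison theorem} is available at \emph{every} lateral boundary point at the single fixed scale $r\simeq\delta$, while on the other hand \emph{all} of the associated reference points $\overline{A}_r,\underline{A}_r$ still lie in the interior region $\Om_{\delta',T}$ governed by Theorem \ref{boundary max}. Once this compatibility is secured, the argument is the routine concatenation of the comparison and Harnack estimates already established, exactly as in the proof of Theorem 3.2 in \cite{G}.
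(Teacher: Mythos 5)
Your proposal is correct and follows essentially the same route the paper points to: the paper states that this theorem is a consequence of Theorems \ref{boundary max} and \ref{Boundary comparison theorem}, with a proof identical to that of Theorem 3.2 in \cite{G}, and that is exactly the argument you reconstruct — split $F$ into a region at fixed distance from $S_T$, where Theorem \ref{boundary max} directly gives comparability of $u$ (resp.\ $v$) with $u(X_0,T)$ (resp.\ $v(X_0,T)$), and a near-boundary collar, where Theorem \ref{Boundary comparison theorem} at a single fixed scale $r\simeq\delta$ reduces $u/v$ to its values at the reference points $\overline{A}_r,\underline{A}_r$, which are then pulled back to $(X_0,T)$ again via Theorem \ref{boundary max}. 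The calibration you flag at the end (choosing $\delta$ so that the local comparison is uniformly applicable along $\Delta$-scale boundary neighborhoods while keeping $\overline{A}_r,\underline{A}_r$ inside $\Om_{\delta',T}$) is indeed the only point requiring care, and your treatment of it is correct.
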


Before stating the next result we recall that functions in the class $\mathcal S(\la,\Lambda,a)$ are invariant under translation in time, see Remark \ref{R:pucci} above.
Consequently, by arguing as in \cite{G}, Theorem 3.3, we have the following backward Harnack inequality for functions vanishing on the lateral boundary $S_T$.

\begin{thrm}[Backward Harnack inequality for $C^{1,1}$ cylinders]\label{backward harnack}
Let $C_T$, $u$, $X_0\in \Om$, $\delta$, $F$ and $C$ be as in Theorem \ref{global1}. Then, for $r$ small enough, one has for $(x,t) \in F$
\begin{equation}\label{bharnack}
u(x, t+ 4r^2) \leq C u(x,t).
\end{equation}
\end{thrm}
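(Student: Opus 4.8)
The plan is to derive the backward Harnack inequality \eqref{bharnack} as a direct consequence of the global comparison theorem \ref{global1}, exploiting the time-translation invariance of the class $\mathcal S(\la,\Lambda,a)$ noted in Remark \ref{R:pucci}. The key observation is that if $u\ge 0$ belongs to $\mathcal S(\la,\Lambda,a)$ in $C_T$ and vanishes continuously on $S_T$, then for any fixed $h>0$ the time-translated function $v(x,t)=u(x,t+h)$ also belongs to $\mathcal S(\la,\Lambda,a)$ — this time in the translated cylinder $\Om\times(-h,T-h)$ — and it too vanishes continuously on the corresponding lateral boundary. Thus, if we shrink the time interval slightly we can apply the comparison between $u$ and its own time-translate on a common subcylinder.

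First I would fix $h>0$ small (to be specified, of size comparable to $r^2$) and restrict attention to the cylinder $C_{T}' = \Om\times(0,T-h)$, or rather work on $\Om\times(0, T-\delta^2)$ after the translation so that both $u(\cdot,\cdot)$ and $v(\cdot,\cdot):=u(\cdot,\cdot+h)$ are defined, nonnegative, in the class $\mathcal S(\la,\Lambda,a)$, and vanish continuously on the lateral boundary of a common $C^{1,1}$ cylinder of the form $\Om\times(0,T')$ with $T'$ slightly less than $T$. I then apply Theorem \ref{global1} to the pair $(v,u)$ on this cylinder (choosing the reference point as $(X_0,T')$ for a suitable $T'$), which yields, for all $(x,t)$ in the corresponding set $F'=\Om\times(2\delta^2,T'-\delta^2)$,
\[
\frac{v(x,t)}{u(x,t)} \leq C\,\frac{v(X_0,T')}{u(X_0,T')} = C\,\frac{u(X_0,T'+h)}{u(X_0,T')}.
\]
Unwinding the definition of $v$, this reads $u(x,t+h)\le C\,\frac{u(X_0,T'+h)}{u(X_0,T')}\,u(x,t)$. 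It then remains to bound the constant $\frac{u(X_0,T'+h)}{u(X_0,T')}$ by a universal constant; this follows from the interior Harnack inequality, Theorem \ref{Harnack}, applied at the interior point $X_0$ over the short time lapse $h$ (choosing $h$ comparable to the square of the distance of $X_0$ to $\p\Om$, or simply absorbing it into the already-fixed geometry), since both times lie in a compact interior region bounded away from $t=0$ and $t=T$. Taking $h=4r^2$ with $r$ small enough that $T'+h<T$ and $F\subset F'$ gives precisely \eqref{bharnack}.

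The main obstacle — and the place where care is needed — is bookkeeping the time intervals and the dependence of constants: one must ensure that after the time translation the smaller cylinder is still a $C^{1,1}$ cylinder with the same character, that the set $F$ of Theorem \ref{global1} is contained in the set $F'$ associated to the translated problem, and that the requirement "$r$ small enough" is compatible with all the smallness conditions ($r<\tfrac{1}{4a}$, $r$ small relative to $\delta$, $s_0$, $T-s_0$, etc.) inherited from Theorems \ref{Boundary comparison theorem} and \ref{global1}. Since the argument is structurally identical to the proof of Theorem 3.3 in \cite{G}, and the only new input — the translation invariance of $\mathcal S(\la,\Lambda,a)$ — has already been isolated in Remark \ref{R:pucci}, the cleanest exposition is to invoke that reference: I would simply state that the proof proceeds exactly as in \cite{G}, Theorem 3.3, replacing the use of the linear backward-in-time structure with the time-translation invariance of the class, and applying Theorem \ref{global1} in place of its linear analogue. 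Hence the proof can be omitted or compressed to a single paragraph pointing to \cite{G}.
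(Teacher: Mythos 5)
Your proposal correctly isolates the two main ingredients (time-translation invariance of $\mathcal S(\la,\Lambda,a)$ and the global comparison Theorem~\ref{global1}), and the setup through the inequality
\[
\frac{u(x,t+4r^2)}{u(x,t)} \leq C\,\frac{u(X_0,T'+4r^2)}{u(X_0,T')}
\]
matches the paper's strategy. However, there is a genuine gap at the final step: you claim that the ratio $u(X_0,T'+4r^2)/u(X_0,T')$ is bounded by invoking the interior Harnack inequality, Theorem~\ref{Harnack}. That inequality goes \emph{forward} in time: it states $\max u(\cdot,\text{earlier}) \leq C\min u(\cdot,\text{later})$, hence in particular $u(X_0,T') \leq C\,u(X_0,T'+4r^2)$, which is the wrong direction. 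Bounding the later-time value $u(X_0,T'+4r^2)$ by the earlier-time value $u(X_0,T')$ is exactly a backward-in-time estimate at the interior point, which is precisely what the theorem is trying to establish and which fails in general for parabolic equations (consider a rapidly decaying caloric function). The interior Harnack inequality cannot supply it, no matter how small $h$ is chosen.

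What actually closes the argument is Theorem~\ref{boundary max}, the elliptic-type Harnack inequality $\max_{\Om_{\delta,T}} u \leq C \min_{\Om_{\delta,T}} u$, which holds precisely because $u$ vanishes continuously on the lateral boundary $S_T$. Since both $(X_0,T')$ and $(X_0,T'+4r^2)$ lie in $\Om_{\delta,T}$ for $r$ small, this gives the needed two-sided control of $u(X_0,\cdot)$ at nearby times, in both directions. Indeed the paper's proof invokes exactly Remark~\ref{R:pucci} together with Theorems~\ref{global1} \emph{and} \ref{boundary max}; your proposal drops the essential second theorem and replaces it with the interior Harnack inequality, which cannot do the job.
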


\begin{proof}
By Remark \ref{R:pucci} the function $v= u(x, t+ 4r^2)$ belongs to $\mathcal S(\la,\Lambda,a)$. The desired conclusion then follows from Theorems \eqref{global1} and \eqref{boundary max}.

\end{proof}

\begin{rmrk}\label{R:moregeneral}
 We would like to emphasize that we could have treated slightly more general differential inequalities of the form 
\begin{equation}
\Mm (u) - a |Du| - b|u| - u_t \leq u \leq \Mp (u) + a|Du| + b|u| - u_t.
\end{equation}
One can see that the barrier in Theorem \ref{barrier} can be adapted to this situation as well, and thus the  Carleson estimate in Theorem \ref{Carleson} and  the comparison Theorem \ref{Boundary comparison theorem} continue to be valid in this situation. However, for the backward Harnack inequality Theorem \ref {backward harnack}, we crucially make use of the boundary maximum principle Theorem \ref{boundary max} which in turn makes use of the fact that constants  are solutions to the differential inequalities \eqref{maine}. But this is true in the case when $b=0$. Thus, for Theorem \ref{backward harnack}, we do need the assumption $b=0$.
\end{rmrk}

\begin{rmrk}\label{R:final}
It remains a challenging open question whether, for functions in the class $\mathcal S(\la,\Lambda,a)$, Theorem \ref{Boundary comparison theorem} continues to be true for a cylinder $C_T = \Om\times (0,T)$, with a Lipschitz, or more in general non-tangentially accessible (NTA), base $\Om$. To the best of our knowledge, this result  is not known even in the case of fully nonlinear elliptic equations.
\end{rmrk}

\section{Two applications}\label{S:app}

In this section we present two applications of the results in Sections \ref{S:ce} and \ref{S:C11}. We plan to come back to further applications in a future study.

\subsection{Non-divergence form linear equations}

The spirit of this subsection is mainly demonstrative. What we mean by this is that, although the result that we obtain as an application of the results in this paper is not new, the way we obtain it is, and puts it in the broader perspective of the viscosity fully nonlinear theory. In a cylinder $C_T = \Om \times (0,T)$ consider the time-dependent non-divergence, uniformly parabolic linear equation
\begin{equation}\label{linear}
Pu = Lu - u_t =  \sum_{i,j=1}^n a_{ij}(x,t) D_{ij} u  - u_t = 0,
\end{equation}
where for $(x,t)\in C_T$ one has $a_{ij}(x,t)\in [[\la,\Lambda]]$ and, in addition, the $a_{ij}$ are assumed to be continuous. It is well-known, see  for instance Theorem 1.5 in \cite{G}, that if $u$ is a strong solution to \eqref{linear}, then $u \in W^{2,1}_{p, loc}(C_T)$ for all $p > 1$. Since \eqref{linear} is a special case of \eqref{main}, by Proposition 2.11 in \cite{CKS}, $u$ is a viscosity solution to \eqref{linear}. Consequently, the results in this paper imply the backward Harnack inequality, Theorem \ref{backward harnack} above, for  non-negative solutions to \eqref{linear} in $C^{1,1}$ cylinders. We cannot stress enough the non-trivial and remarkable aspect of this observation: we are dealing with a linear parabolic equation, \eqref{linear}, whose coefficients are time-dependent. This serious obstruction can be circumvented by looking at \eqref{linear} from the  point of view of the fully nonlinear class $\mathcal S(\la,\Lambda,a)$ of the Pucci extremal operators, which are instead time-independent!

Now, having said this, we ought to nonetheless mention that, for the linear equation \eqref{linear}, the backward Harnack inequality in Theorem \ref{backward harnack} is in fact available, even in Lipschitz cylinders. This is so thanks to the work, subsequent to \cite{G}, of Fabes, Safonov and Yuan \cite{FSY}, see also the paper \cite{SY}, both of which involve a great deal of hard analysis. In connection with these works, we mention that, when  $u$ is a viscosity solution of the fully nonlinear equation 
\begin{equation}\label{F2}
F(D^2 u,x,t) - u_t=0,
\end{equation}
where $F$ is concave in $D^2 u$ and $C^{1}$, then, by the Evans-Krylov theorem,  $u$ is  a classical $H^{2 + \alpha}$ solution. This follows, for instance, by the existence of classical solutions with continuous boundary values for such $F$, see Theorem 14.10 in \cite{L}, combined with the uniqueness of solutions. Therefore, $u$ solves a linear equation such as \eqref{linear}, where 
\[
a_{ij}(x,t)= \int_0^1 \frac{\partial F}{ \partial M_{ij}}(s D^2 u(x,t), x,t) ds.  
\]
Hence, from the linear theory, see Theorem 3.7 in \cite{FSY}, in the case when $F$ is concave in $M$, then for viscosity solutions of \eqref{F2} the Carleson estimate and the backward Harnack inequality are true in Lipschitz domains.

\subsection{On a degenerate diffusion equation}\label{SS:p}

In this subsection we show how the results in this paper can be applied to obtain new theorems for the nonlinear degenerate diffusion equation
\begin{equation}\label{p}
 u_t = |Du|^{2-p} \text{div}(|Du|^{p-2} Du),\ \ \ \ \ \ \ \ 1<p<\infty. 
\end{equation}
The equation \eqref{p} has recently been  studied in \cite{BG}, \cite{Do}, \cite{MPR}. Such an equation generalizes the motion of level sets by mean curvature, which is relative to the case $p=1$, and the heat equation, which corresponds to $p=2$. In the interesting paper \cite{MPR}, solutions to the equation \eqref{p} have been characterized by asymptotic mean value properties. These properties are connected with the analysis of tug of war games with noise in which the number of rounds in bounded. The value function for these games approximate a solution to the PDE \eqref{p} when the parameter that controls the size of  possible  steps goes to zero.
Let us introduce the relevant notion of sub-, super-, and solution to \eqref{p}. For further details on the properties of viscosity solutions of \eqref{p}, we refer the reader to the paper \cite{BG}.

\begin{dfn}\label{D:p}
A   function  $u\in C(\overline{\Om} \times [0, T))\cap L^{\infty}(\Om \times [0, T))$ is called a \emph{viscosity subsolution} of \eqref{p}  provided that for any $\vf \in  C^{2}(\Om \times (0, T))$ for which 
\begin{equation}
u - \vf\quad  \text{has a local maximum at}\quad   z_0 \in \Om \times  (0, T],
\end{equation}
then, either
\begin{equation}
\begin{cases}
\vf_t \leq \left(\delta_{ij} + (p-2)\frac{D_i \vf D_j \vf}{|D\vf|^{2}}\right)D_{ij} \vf\quad\ \ \text{at}\quad z_0,
\\
\text{if}\   D\vf(z_0) \not= 0,
\end{cases}
\end{equation}
or
\begin{equation}
\begin{cases}
\underset{|a| = 1}{\inf}\ \left(\vf_t - (\delta_{ij} + (p-2)a_{i}a_{j})D_{ij}\vf\right)  \leq 0\ \ \ \text{at}\ z_0,
\\
\text{if}\   D\vf(z_0) = 0.
\end{cases}
\end{equation}
Analogous definitions for supersolution, or solution.
\end{dfn}

Note  that, from Definition \ref{D:p}, at a maximum point of $u - \vf$, we have that
\begin{equation}\label{p2}
\vf_t \leq a_{ij}(D \vf) D_{ij} \vf,
\end{equation}
where 
\begin{equation}\label{lambdas}
\min\{1,p-1\}\ \mathbb I_n \leq [a_{ij}] \leq \max\{1,p - 1\}\ \mathbb{I}_n.
\end{equation}
The inequality \eqref{p2} is seen to be true by considering the matrix $[a_{ij}]$ in both cases when $D\vf$ vanishes and when it does not, see \cite{BG}. It is thus  easily seen from Definition \ref{D:p} and \eqref{lambdas} that any viscosity solution to \eqref{p} belongs to the class $\mathcal S(\la,\Lambda,a)$, with $\lambda = \min\{1,p-1\}$ and $\Lambda= \max\{1,p-1\}$.  Consequently, and quite notably, all the results in this paper, such as the Carleson estimate in Theorem \ref{Carleson}, the local and global comparison Theorems \ref {Boundary comparison theorem} and \ref{global1}, and the backward Harnack inequality in Theorem \ref{backward harnack}, are valid for nonnegative solutions of the degenerate diffusion equation \eqref{p} as well.  Here, we confine ourselves to state, as an example, the following two results. We note that the degenerate parabolic equation \eqref{p} has a non-variational structure and therefore the viscosity framework of this paper is the natural one for studying it.

\begin{thrm}[Local comparison theorem]\label{Boundary comparison theorem p}
Let $C_T = \Om\times (0,T)$ be a $C^{1,1}$ cylinder and let $(Q_{0}, s_{0}) \in S_{T}$. For any $1<p<\infty$, there exists $r_0>0$, depending on $n, p$, the $C^{1,1}$ nature of $\Om$ and on $s_0, T- s_0$, and a  $C>0$, depending on $n, p$ and the $C^{1,1}$ character of $\Om$, such that if $u, v \ge 0$ are two viscosity solutions of \eqref{p} in $\Psi_{2r}(Q_{0}, s_{0})$, and $u=v=0 $ continuously on $\Delta_{2r}(Q_{0}, s_{0})$, then for all $(x,t) \in \Psi_{r}(Q_{0}, s_{0})$, one has
\begin{equation*}
\frac{1}{C} \frac{ u(\underline{A}_{r}(Q_{0}, s_{0}))}{v(\overline{A}_{r}(Q_{0}, s_{0}))}\leq \frac{u(x,t)}{v(x,t)}\leq C \frac{ u(\overline{A}_{r}(Q_{0}, s_{0}))}{v(\underline{A}_{r}(Q_{0}, s_{0}))}.
\end{equation*}
\end{thrm}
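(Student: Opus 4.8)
The proof of Theorem \ref{Boundary comparison theorem p} is essentially immediate from Theorem \ref{Boundary comparison theorem}, and the plan is simply to verify that the inclusion of viscosity solutions of \eqref{p} into the class $\mathcal S(\la,\Lambda,a)$ holds with the asserted parameters, and then invoke the general result. First I would recall, as already noted in the discussion surrounding \eqref{p2} and \eqref{lambdas} (and proved in detail in \cite{BG}), that if $u$ is a viscosity subsolution of \eqref{p} in the sense of Definition \ref{D:p}, then at any local maximum point $z_0$ of $u-\vf$ with $\vf\in C^2$ one has $\vf_t(z_0)\le a_{ij}(D\vf(z_0))D_{ij}\vf(z_0)$ for a symmetric matrix $[a_{ij}]$ satisfying $\min\{1,p-1\}\,\mathbb I_n\le[a_{ij}]\le\max\{1,p-1\}\,\mathbb I_n$; the only subtlety is the case $D\vf(z_0)=0$, which is handled by the $\inf$ clause in Definition \ref{D:p} and the elementary observation that for $D\vf=0$ one may choose $[a_{ij}]$ appropriately in $[[\la,\Lambda]]$. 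From $\vf_t\le a_{ij}D_{ij}\vf\le\Mp(D^2\vf)$ (using \eqref{pucci} and $[a_{ij}]\in[[\la,\Lambda]]$) and the fact that there are no first-order terms, i.e. $a=0$, we conclude $\vf_t(z_0)\le\Mp(D^2\vf)(z_0)=\Lp\vf(z_0)$, so $u$ is a viscosity subsolution of \eqref{Lp}. Symmetrically, a viscosity supersolution of \eqref{p} is a viscosity supersolution of \eqref{Lm}, hence any viscosity solution of \eqref{p} lies in $\mathcal S(\la,\Lambda,a)$ with $\la=\min\{1,p-1\}$, $\Lambda=\max\{1,p-1\}$, $a=0$.

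Having established this, the second and final step is to apply Theorem \ref{Boundary comparison theorem} directly to the pair $u,v\ge 0$, which are now functions in $\mathcal S(\la,\Lambda,a)$ in $\Psi_{2r}(Q_0,s_0)$ vanishing continuously on $\Delta_{2r}(Q_0,s_0)$. The conclusion \eqref{bc} of that theorem is exactly the displayed inequality in the present statement. The only bookkeeping point is the dependence of the constants: in Theorem \ref{Boundary comparison theorem} the parameters $r_0$ and $C$ are called ``universal,'' meaning (Remark \ref{R:universal}) they depend on $n$, $\la$, $\Lambda$, $a$, together with the $C^{1,1}$ character of $\Om$ and $s_0,T-s_0$. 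Since here $\la=\min\{1,p-1\}$, $\Lambda=\max\{1,p-1\}$, $a=0$ are all determined by $p$, this translates precisely into the stated dependence on $n$, $p$, the $C^{1,1}$ nature of $\Om$, and $s_0,T-s_0$ for $r_0$, and on $n$, $p$ and the $C^{1,1}$ character of $\Om$ for $C$. I should also remark that the condition $r<\tfrac{1}{4a}$ appearing in the proof of Theorem \ref{Boundary comparison theorem} is vacuous here since $a=0$, so no extra smallness beyond the $C^{1,1}$-geometric constraints is needed.

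There is essentially no obstacle in this argument: the entire content is the reduction lemma embedding solutions of \eqref{p} into $\mathcal S(\la,\Lambda,a)$, and even that has already been done in \cite{BG} and recalled in the text. If anything, the one place requiring a word of care is making the passage $\vf_t\le a_{ij}D_{ij}\vf\Rightarrow\vf_t\le\Mp(D^2\vf)$ rigorous when $D\vf(z_0)=0$, where Definition \ref{D:p} only gives $\inf_{|a|=1}(\vf_t-(\delta_{ij}+(p-2)a_ia_j)D_{ij}\vf)\le 0$; one picks a minimizing (or near-minimizing) direction $a$, sets $[a_{ij}]=\delta_{ij}+(p-2)a_ia_j$, checks its eigenvalues lie in $[\min\{1,p-1\},\max\{1,p-1\}]$, and concludes. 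Since this is the only substantive step and it is elementary, the proof reduces to one sentence of verification followed by a citation of Theorem \ref{Boundary comparison theorem}.
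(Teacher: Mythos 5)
Your proof is correct and follows exactly the route taken in the paper: the paragraph preceding Theorem \ref{Boundary comparison theorem p} observes via Definition \ref{D:p} and \eqref{lambdas} that viscosity solutions of \eqref{p} lie in $\mathcal S(\la,\Lambda,a)$ with $\la=\min\{1,p-1\}$, $\Lambda=\max\{1,p-1\}$, $a=0$, and then the theorem is stated as an immediate consequence of Theorem \ref{Boundary comparison theorem}. Your additional care about the $D\vf(z_0)=0$ case (compactness of the unit sphere, eigenvalue check for $\delta_{ij}+(p-2)a_ia_j$) and the remark that $r<\tfrac{1}{4a}$ is vacuous for $a=0$ are sound bookkeeping that the paper leaves implicit.
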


\begin{thrm}[Backward Harnack inequality]\label{backward harnack p}
Let $C_T$,  $X_0\in \Om$, $\delta$, $F$ and be as in Theorem \ref{global1} above. Then, given a viscosity solution $u\ge 0$ of \eqref{p} in $C_T$, vanishing continuously on $S_T$, there exist $r_0>0$, depending on $n, p, \delta$, and and the $C^{1,1}$ character of $\Om$, and a constant $C>0$, depending on $n, p$ and the $C^{1,1}$ character of $\Om$, such that for $0<r\le r_0$, one has for $(x,t) \in F$
\[
u(x, t+ 4r^2) \leq C u(x,t).
\]
\end{thrm}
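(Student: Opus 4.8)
The plan is to reduce Theorem~\ref{backward harnack p} to the already-established Theorem~\ref{backward harnack} by observing that every nonnegative viscosity solution of \eqref{p} lies in the class $\mathcal S(\la,\Lambda,a)$ with the specific parameters $\la=\min\{1,p-1\}$, $\Lambda=\max\{1,p-1\}$, and $a=0$. This membership was already spelled out in the discussion around \eqref{p2} and \eqref{lambdas}: at any point where $u-\vf$ has a local maximum, the inequality \eqref{p2} holds with a matrix $[a_{ij}]=[a_{ij}(D\vf)]\in[[\la,\Lambda]]$, so by \eqref{pucci} one has $\vf_t\le \text{trace}([a_{ij}]D^2\vf)\le \Mp(D^2\vf)=\Mp(D^2\vf)+a|D\vf|$ with $a=0$, i.e.\ $u$ is a viscosity subsolution of $\Lp u - u_t=0$; the symmetric argument at a local minimum shows $u$ is a viscosity supersolution of $\Lm u - u_t=0$. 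Hence $u\in\mathcal S(\la,\Lambda,a)$ on $C_T$.

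Once this is in place, I would simply invoke Theorem~\ref{backward harnack}. Concretely: given $C_T=\Om\times(0,T)$ a $C^{1,1}$ cylinder, $X_0\in\Om$, and the universal $\delta\in(0,\sqrt T/2)$ and set $F=\Om\times(2\delta^2,T-\delta^2)$ from Theorem~\ref{global1}, the fact that $u\ge 0$ is a viscosity solution of \eqref{p} vanishing continuously on $S_T$ gives $u\in\mathcal S(\la,\Lambda,a)$ on $C_T$ with $u=0$ continuously on $S_T$. Theorem~\ref{backward harnack} then yields, for $r$ small enough, the estimate $u(x,t+4r^2)\le C\,u(x,t)$ for all $(x,t)\in F$, with $C$ and the threshold $r_0$ universal — and since here ``universal'' means depending only on $n,\la,\Lambda,a$ (Remark~\ref{R:universal}), and $\la,\Lambda$ are explicit functions of $p$ while $a=0$, the dependence is exactly on $n,p$ together with $\delta$ and the $C^{1,1}$ character of $\Om$, as asserted. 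One should also note in passing that the constraint $r<\frac{1}{4a}$ appearing in the proof of Theorem~\ref{Boundary comparison theorem} is vacuous here since $a=0$, so $r_0$ depends only on the $C^{1,1}$ character of $\Om$ and on $s_0,T-s_0$ (equivalently $\delta$) and on $n,p$ through $\la,\Lambda$.

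There is essentially no obstacle: the entire content is the translation step ``solution of \eqref{p} $\Rightarrow$ member of $\mathcal S(\la,\Lambda,a)$,'' which is already justified in Subsection~\ref{SS:p} and relies only on Definition~\ref{D:p}, the pointwise bound \eqref{p2}, and the eigenvalue inclusion \eqref{lambdas}. The mild point worth a sentence of care is the case $D\vf(z_0)=0$ in Definition~\ref{D:p}: there one uses that the infimum over unit vectors $a$ of $\vf_t-(\delta_{ij}+(p-2)a_ia_j)D_{ij}\vf$ being $\le 0$ still produces \emph{some} matrix $[\delta_{ij}+(p-2)a_ia_j]\in[[\la,\Lambda]]$ realizing (or approximating, then passing to the limit) the required inequality \eqref{p2}; this is exactly the verification referenced to \cite{BG}. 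Since the degenerate equation \eqref{p} is genuinely non-variational, this viscosity/Pucci-class route is the natural one, and no further regularity of $u$ is needed beyond continuity.

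\begin{proof}
As observed in the discussion surrounding \eqref{p2}--\eqref{lambdas}, any nonnegative viscosity solution $u$ of \eqref{p} in $C_T$ belongs to the class $\mathcal S(\la,\Lambda,a)$ with
\[
\la=\min\{1,p-1\},\qquad \Lambda=\max\{1,p-1\},\qquad a=0 .
\]
Indeed, if $\vf\in C^2$ and $u-\vf$ has a local maximum at $z_0$, then by Definition~\ref{D:p} (treating separately the cases $D\vf(z_0)\neq 0$ and $D\vf(z_0)=0$, see \cite{BG}) there is a matrix $[a_{ij}]\in[[\la,\Lambda]]$ with $\vf_t(z_0)\le a_{ij}D_{ij}\vf(z_0)$, whence by \eqref{pucci}
\[
\vf_t(z_0)\le \text{trace}\big([a_{ij}]D^2\vf(z_0)\big)\le \Mp\big(D^2\vf(z_0)\big)=\Mp\big(D^2\vf\big)(z_0)+a|D\vf(z_0)|,
\]
so $u$ is a viscosity subsolution of \eqref{Lp}; the symmetric argument at a local minimum shows $u$ is a viscosity supersolution of \eqref{Lm}. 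Hence $u\in\mathcal S(\la,\Lambda,a)$.

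Now let $C_T=\Om\times(0,T)$, $X_0\in\Om$, $\delta$ and $F=\Om\times(2\delta^2,T-\delta^2)$ be as in Theorem~\ref{global1}, and let $u\ge 0$ be a viscosity solution of \eqref{p} in $C_T$ vanishing continuously on $S_T$. By the previous paragraph, $u$ is a nonnegative function in $\mathcal S(\la,\Lambda,a)$ in $C_T$ with $u=0$ continuously on $S_T$, for the above values of $\la,\Lambda,a$. Applying Theorem~\ref{backward harnack}, we obtain $r_0>0$ and $C>0$ such that for $0<r\le r_0$,
\[
u(x,t+4r^2)\le C\,u(x,t),\qquad (x,t)\in F .
\]
Finally, since a universal constant depends only on $n,\la,\Lambda,a$ (Remark~\ref{R:universal}), and here $\la,\Lambda$ are explicit functions of $p$ while $a=0$, the constants $r_0$ and $C$ depend only on $n$, $p$, the $C^{1,1}$ character of $\Om$, and (through $\delta$) on $T$; in particular the auxiliary condition $r<\frac1{4a}$ from the proof of Theorem~\ref{Boundary comparison theorem} is vacuous. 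This proves the stated inequality.
\end{proof}
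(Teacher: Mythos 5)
Your proof is correct and follows exactly the route the paper takes: observe (as the paper does around \eqref{p2}--\eqref{lambdas}) that any nonnegative viscosity solution of \eqref{p} lies in $\mathcal S(\la,\Lambda,a)$ with $\la=\min\{1,p-1\}$, $\Lambda=\max\{1,p-1\}$, $a=0$, and then apply Theorem~\ref{backward harnack} directly. The paper presents Theorem~\ref{backward harnack p} as an immediate corollary of this membership, which is precisely what you have written out.
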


We close by remarking that, in Definition \ref{D:p}, similarly to what was done in \cite{Do}, \cite{BG}, \cite{MPR}, parabolic Euclidean neighborhoods are considered for local extrema. However, by arguing as in Lemma 1.4 in \cite{CKS} (this lemma shows that considering parabolic neighborhoods is equivalent to taking Euclidean neighborhoods), it suffices to consider parabolic neighborhoods at $z_0$. This is what is normally done in the literature, see for instance \cite{W}.

\end{document}